\DeclareFontFamily{OT1}{pzc}{}
\DeclareFontShape{OT1}{pzc}{m}{it}{<-> s * [1.050] pzcmi7t}{}
\DeclareMathAlphabet{\mathpzc}{OT1}{pzc}{m}{it}
\newcommand{\ii}{\hat{\imath}}
\newcommand{\kh}{k^h}
\newcommand{\dive}{\mathop{\vec\nabla \cdot}}
\newcommand{\curl}{\mathop{\vec\nabla \times}}
\newcommand{\scurl}{\mathop{\nabla \times}}
\newcommand{\grad}{\ensuremath{{\vec\nabla}}}
\newcommand{\vu}{\vec u}
\newcommand{\vv}{\vec v}
\newcommand{\vr}{\vec r}
\newcommand{\vw}{\vec w}
\newcommand{\vn}{\vec n}
\newcommand{\vt}{\vec t}
\newcommand{\vE}{\vec E}
\newcommand{\vH}{\vec H}
\newcommand{\vj}{\vec{\jmath}}
\newcommand{\vl}{\vec l}
\newcommand{\vp}{{\vec{p}}}
\newcommand{\vcalU}{\vec{\mathcal{U}}}
\newcommand{\vcalE}{\vec{\mathcal{E}}}
\newcommand{\vcalH}{\vec{\mathcal{H}}}
\newcommand{\vcalr}{\vec{\mathpzc{r}}}
\newcommand{\calE}{\mathcal{E}}
\newcommand{\calH}{\mathcal{H}}
\newcommand{\ip}[1]{\langle {#1} \rangle}
\newcommand{\im}{\operatorname{Im}}
\newcommand{\re}{\operatorname{Re}}
\newcommand{\veps}{\varepsilon}
\newcommand{\Th}{\mathcal{T}_h}
\newcommand{\ZZZ}{\mathbb{Z}}
\def\d{\partial}
\renewcommand{\geq}{\geqslant}
\renewcommand{\ge}{\geqslant}
\renewcommand{\leq}{\leqslant}
\renewcommand{\le}{\leqslant}
\newtheorem{theorem}{Theorem}
\newtheorem{lemma}{Lemma}
\newcommand{\thx}{JG and NO were supported in part by the NSF grant
  DMS-1318916 and the AFOSR grant FA9550-12-1-0484. NO gratefully
  acknowledges support in the form of an INRIA internship where
  discussions leading to this work originated.  
  All authors wish to thank
  INRIA Sophia Antipolis
  M\'editerran\'ee for hosting the authors there and 
  facilitating this research.
}
\newcommand{\AbsBackground}{Simulation of wave propagation through
  complex media relies on proper understanding of the properties of
  numerical methods when the wavenumber is real and complex.}
\newcommand{\AbsMethods}{Numerical methods of the Hybrid Discontinuous
  Galerkin (HDG) type are considered for simulating waves that satisfy
  the Helmholtz and Maxwell equations. It is shown that these methods,
  when wrongly used, give rise to singular systems for complex
  wavenumbers.}
\newcommand{\AbsResults}{A sufficient condition on the HDG
  stabilization parameter for guaranteeing unique solvability of the
  numerical HDG system, both for Helmholtz and Maxwell systems, is
  obtained for complex wavenumbers.  For real wavenumbers, results
  from a dispersion analysis are presented. An asymptotic expansion of
  the dispersion relation, as the number of mesh elements per wave
  increase, reveal that some choices of the stabilization parameter
  are better than others.}
\newcommand{\AbsConclusions}{To summarize the findings, there are
  values of the HDG stabilization parameter that will cause the HDG
  method to fail for complex wavenumbers. However, this failure is
  remedied if the real part of the stabilization parameter has the
  opposite sign of the imaginary part of the wavenumber.  When the
  wavenumber is real, values of the stabilization parameter that
  asymptotically minimize the HDG wavenumber errors are
  found on the imaginary axis. Finally, a dispersion analysis of 
  the mixed hybrid Raviart-Thomas method showed that its wavenumber errors
  are an order smaller than those of the HDG method.}
\title[Stabilization and wavenumber in HDG]{Stabilization in relation to wavenumber in HDG methods}
\author{J.~Gopalakrishnan}
\address{Portland State University, PO Box 751, Portland, OR 97207-0751, USA}
\email{gjay@pdx.edu}
\author{S.~Lanteri}
\address{{INRIA Sophia Antipolis M\'editerran\'ee},
  2004 Route des Lucioles, BP 93, 06902 Sophia Antipolis Cedex, France}
\email{stephane.lanteri@inria.fr}
\author{N.~Olivares}
\address{Portland State University, PO Box 751, Portland, OR 97207-0751, USA}
\email{nmo@pdx.edu}
\author{R.~Perrussel}
\address{LAPLACE (LAboratoire PLAsma et Conversion d’Energie),
  Universit\'e de Toulouse, CNRS/INPT/UPS, Toulouse,{France}}
\email{perrussel@laplace.univ-tlse.fr}
\thanks{\thx}
\keywords{
HDG,
Raviart-Thomas,
dispersion,
dissipation,
absorbing material,
complex,
wave speed,
optimal,
stabilization,
Helmholtz,
Maxwell,
unisolvency}
\begin{document}

\begin{abstract}
  \AbsBackground\; \AbsMethods\; \AbsResults\; \AbsConclusions
\end{abstract}

\maketitle

\section*{Background}

Wave propagation through complex structures, composed of both
propagating and absorbing media, are routinely simulated using
numerical methods.  Among the various numerical methods used, the
Hybrid Discontinuous Galerkin (HDG) method has emerged as an
attractive choice for such simulations. The easy passage to high order
using interface unknowns, condensation of all interior variables,
availability of error estimators and adaptive algorithms, are some of
the reasons for the adoption of HDG methods.

It is important to design numerical methods that remain stable as the
wavenumber varies in the complex plane.  For example, in applications
like computational lithography, one finds absorbing materials with
complex refractive index in parts of the domain of simulation.
Other examples are furnished by
meta-materials.  A separate and important reason for requiring
such stability emerges in the computation of
resonances by iterative searches in the complex plane.  It is common
for such iterative algorithms to solve a source problem with a complex
wavenumber as its current iterate. Within such algorithms, if the HDG
method is used for discretizing the source problem, it is imperative
that the method remains stable for all complex wavenumbers.

One focus of this study is on complex wavenumber cases in acoustics
and electromagnetics, motivated by the above-mentioned examples.  Ever
since the invention of the HDG method in~\cite{CockbGopalLazar09}, it
has been further developed and extended to other problems in many
works (so many so that it is now impractical to list all references on
the subject here). Of particular interest to us are works that applied
HDG ideas to wave propagation problems such
as~\cite{CuiZhang13,GiorgFernaHuert13,GriesMonk11,HuertRocaAleks12,
  LiLantePerru13a,LiLantePerru14, NguyePeraiCockb11a}. We will make
detailed comparisons with some of these works in a later
section. However, none of these references address the stability
issues for complex wavenumber cases. While the choice of the HDG
stabilization parameter in the real wave number case can be safely
modeled after the well-known choices for elliptic
problems~\cite{CockbGopalSayas10}, the complex wave number case is
essentially different. This will be clear right away from a few
elementary calculations in the next section, which show that the
standard prescriptions of stabilization parameters are not always
appropriate for the complex wave number case.  This then raises
further questions on how the HDG stabilization parameter should be
chosen in relation to the wavenumber, which are addressed in later
sections.

Another focus of this study is on the difference in speeds of the
computed and the exact wave, in the case of real wavenumbers. By means
of a dispersion analysis, one can compute the discrete wavenumber of a
wave-like solution computed by the HDG method, for any given exact
wavenumber. An extensive bibliography on dispersion analyses for the
standard finite element method can be obtained
from~\cite{Ainsw04,DeraeBabusBouil99}. For nonstandard finite element
methods however, dispersion analysis is not so
common~\cite{GopalMugaOliva14}, and for the HDG method, it does not
yet exist. We will show that useful insights into the HDG method can
be obtained by a dispersion analysis. In multiple dimensions, the
discrete wavenumber depends on the propagation angle. Analytic
computation of the dispersion relation is feasible in the lowest order
case. We are thus able to study the influence of the stabilization
parameter on the discrete wavenumber and offer recommendations on
choosing good stabilization parameters. The optimal stabilization
parameter values are found not to depend on the wavenumber. In the
higher order case, since analytic calculations pose difficulties, we
conduct a dispersion analysis numerically.

We begin, in the next section, by describing the HDG methods. 
We set the stage for this study  by 
showing that the commonly chosen HDG stabilization parameter values
for elliptic problems are not appropriate for all complex
wavenumbers. In the subsequent section, we discover a constraint on
the stabilization parameter, dependent on the wavenumber, that
guarantees unique solvability of both the global and the local HDG
problems. Afterward, we perform a dispersion analysis for both the HDG method 
and a mixed method and discuss the results.

\section*{Methods of the HDG type}

We borrow the basic methodology for constructing HDG methods
from~\cite{CockbGopalLazar09} and apply it to the time-harmonic
Helmholtz and Maxwell equations (written as first order
systems). While doing so, we set up the notations used throughout,
compare the formulation we use with other  existing works, and show
that for complex wavenumbers there are stabilization parameters that
will cause the HDG method to fail.

\subsection*{Undesirable HDG stabilization parameters for the Helmholtz system}
\label{sec:helmholtz}

We begin by considering the lowest order HDG system for Helmholtz
equation. Let $k$ be a complex number. Consider the Helmholtz system
on $\Omega\subset\mathbb{R}^2$ with homogeneous Dirichlet boundary
conditions,
\begin{subequations}
  \label{eq:Helmholtz}
  \begin{align}
    \ii k \vcalU + \grad \varPhi & = \vec 0, \qquad \text{in } \Omega,
    \\
    \ii k \varPhi + \dive \vcalU & = f, \qquad \text{in } \Omega,
    \\
    \varPhi &= 0, \qquad \text{on } \partial\Omega,
  \end{align}
\end{subequations}
where $f\in L^2(\Omega)$. Let $\Th$ denote a square or triangular mesh
of disjoint elements $K$, so $\overline\Omega=\cup_{K\in\Th}\overline
K$, and let $\mathcal{F}_h$ denote the collection of edges. The HDG
method produces an approximation $(\vu, \phi, \hat \phi)$ to the exact
solution $(\vcalU, \varPhi, \hat \varPhi)$, where $\hat\varPhi$
denotes the trace of $\varPhi$ on the collection of element boundaries
$\partial \Th$. The HDG solution $(\vu, \phi, \hat \phi)$ is in the
finite dimensional space $ V_{h} \times W_{h} \times M_{h}$ defined by
\begin{align*}
  V_h&=\{ \vv\in (L^2(\Omega))^2~:~\vv|_K\in V(K),~\forall K\in\Th\}\\
  W_h&=\{ \psi\in L^2(\Omega)~:~\psi|_K\in W(K),~\forall K\in\Th\}\\
  M_h&=\{ \hat\psi\in L^2( \mathop{\bigcup}_{F \in \mathcal{F}_h}
  F)~:~\hat\psi|_F\in M(F),~\forall F\in\mathcal{F}_h \text{ and
  }\hat\psi|_{\d\Omega}=0\},
\end{align*}
with polynomial spaces $V(K)$, $W(K)$, and $M(F)$ specified
differently depending on element type:
\begin{align*}
  &\hspace{-0.6cm}\text{\underline{Triangles}}\qquad\qquad&&\hspace{-0.5cm}\text{\underline{Squares}}
  \\
  V(K)&=(\mathcal{P}_p(K))^2          &V(K)&=(\mathcal{Q}_p(K))^2\\
  W(K)&=\mathcal{P}_p(K)                     & W(K)&=\mathcal{Q}_p(K) \\
  M(F)&=\mathcal{P}_p(F) &M(F)&=\mathcal{P}_p(F).
\end{align*}
Here, for a given domain $D$, $\mathcal{P}_p(D)$ denotes polynomials
of degree at most $p$, and $\mathcal{Q}_p(D)$ denotes polynomials of
degree at most $p$ in each variable.

The HDG solution satisfies
\begin{subequations}
  \label{eq:globalHelmholtz}
  \begin{align}
    & \sum_{K \in \Th} & \ii k ( \vu, \vv)_K - (\phi, \dive \vv)_K +
    \ip{\hat \phi, \vv\cdot\vn}_{\d K} & = 0, 
    \\
    & \sum_{K\in\Th} & -(\dive \vu, \psi)_K + \ip{ \tau \hat \phi,
      \psi}_{\d K} -\ip{\tau \phi, \psi}_{\d K} - \ii k (\phi,\psi)_K
    & = -(f,\psi)_{\Omega}, 
    \\
    & \sum_{K\in\Th} & \ip{ \vu\cdot \vn + \tau ( \phi - \hat\phi),
      \hat \psi}_{\d K} & = 0,
  \end{align}
\end{subequations}
for all $\vv \in V_h$, $\psi \in W_{h}$, and $\hat\psi \in M_{h}.$
The last equation enforces the conservativity of the numerical flux
\begin{equation}
  \label{eq:numflux}
  \hat u \cdot \vn = 
  \vu\cdot \vn + \tau ( \phi - \hat\phi).
\end{equation}
The stabilization parameter $\tau$ is assumed to be constant on each
$\partial K$. We are interested in how the choice of $\tau$ in
relation to $k$ affects the method, especially when $k$ is complex
valued. Comparisons of this formulation with other HDG formulations
for Helmholtz equations in the literature are summarized in
Table~\ref{tab:comp}.

\begin{table}
  \centering
  \begin{tabular}{|c||c|c|}
    \hline  
    Reference & Their notations and equations & Connection to our formulation
    \\
    \hline   
    & & \\
    $\displaystyle{
      \begin{gathered}
        \text{\cite{CuiZhang13}}
        \\
        \text{Helmholtz case}
      \end{gathered}
    }$
    & 
    $\displaystyle{
      \begin{gathered}
        \vec{q}\raisebox{-1ex}{\tiny\cite{CuiZhang13}} + \grad u
        \raisebox{-1ex}{\tiny\cite{CuiZhang13}} = \vec 0
        \\
        \dive \vec q\raisebox{-1ex}{\tiny\cite{CuiZhang13}} - k^2
        u\raisebox{-1ex}{\tiny\cite{CuiZhang13}} = 0
        \\
        \hat q\raisebox{-1ex}{\tiny\cite{CuiZhang13}} \cdot \vec n =
        \vec q\raisebox{-1ex}{\tiny\cite{CuiZhang13}} \cdot \vec n +
        \ii \tau\raisebox{-1ex}{\tiny\cite{CuiZhang13}}
        (u\raisebox{-1ex}{\tiny\cite{CuiZhang13}}- \hat
        u\raisebox{-1ex}{\tiny\cite{CuiZhang13}} )
      \end{gathered}
    }$
    & 
    $\displaystyle{
      \begin{aligned}
        \tau\raisebox{-1ex}{\tiny\cite{CuiZhang13}} & = k\, \tau
        \\
        \ii k u\raisebox{-1ex}{\tiny\cite{CuiZhang13}} & = \phi
        \\
        \vec q\raisebox{-1ex}{\tiny\cite{CuiZhang13}} & =\vu
      \end{aligned}
    }$
    \\ 
    \hline
    & & \\
    $\displaystyle{
      \begin{gathered}
        \text{\cite{GriesMonk11}}
        \\
        \text{Helmholtz case}
      \end{gathered}
    }$
    & 
    $\displaystyle{
      \begin{gathered}
        \ii k \vec{q}\raisebox{-1ex}{\tiny\cite{GriesMonk11}} + \grad
        u \raisebox{-1ex}{\tiny\cite{GriesMonk11}} = \vec 0
        \\
        \ii k u\raisebox{-1ex}{\tiny\cite{GriesMonk11}} + \dive \vec
        q\raisebox{-1ex}{\tiny\cite{GriesMonk11}} = 0
        \\
        \hat q\raisebox{-1ex}{\tiny\cite{GriesMonk11}} \cdot \vec n =
        \vec q\raisebox{-1ex}{\tiny\cite{GriesMonk11}} \cdot \vec n +
        \tau\raisebox{-1ex}{\tiny\cite{GriesMonk11}}
        (u\raisebox{-1ex}{\tiny\cite{GriesMonk11}}- \hat
        u\raisebox{-1ex}{\tiny\cite{GriesMonk11}} )
      \end{gathered}
    }$
    & 
    $\displaystyle{
      \begin{aligned}
        \tau\raisebox{-1ex}{\tiny\cite{GriesMonk11}} & = \tau
        \\
        u\raisebox{-1ex}{\tiny\cite{GriesMonk11}} & = \phi
        \\
        \vec q\raisebox{-1ex}{\tiny\cite{GriesMonk11}} & =\vu
      \end{aligned}
    }$
    \\ 
    \hline
    & & \\
    $\displaystyle{
      \begin{gathered}
        \text{\cite{LiLantePerru13a}}
        \\
        \text{2D Maxwell case}
      \end{gathered}
    }$
    & 
    $\displaystyle{
      \begin{gathered} 
        \ii \omega \raisebox{-1ex}{\tiny\cite{LiLantePerru13a}}
        \varepsilon_r E \raisebox{-1ex}{\tiny\cite{LiLantePerru13a}}
        -\scurl \vH\raisebox{-1ex}{\tiny\cite{LiLantePerru13a}}= 0
        \\
        \ii \omega\raisebox{-1ex}{\tiny\cite{LiLantePerru13a}} \mu_r
        \vH\raisebox{-1ex}{\tiny\cite{LiLantePerru13a}} +\curl E
        \raisebox{-1ex}{\tiny\cite{LiLantePerru13a}}= \vec 0
        \\
        \hat H \raisebox{-1ex}{\tiny\cite{LiLantePerru13a}} =
        \vH\raisebox{-1ex}{\tiny\cite{LiLantePerru13a}}
        +\tau\raisebox{-1ex}{\tiny\cite{LiLantePerru13a}} (E
        \raisebox{-1ex}{\tiny\cite{LiLantePerru13a}} -\hat
        E\raisebox{-1ex}{\tiny\cite{LiLantePerru13a}})\vec t
      \end{gathered}
    }$
    & 
    $\displaystyle{
      \begin{aligned}
        \tau\raisebox{-1ex}{\tiny\cite{LiLantePerru13a}} &=
        \sqrt{\frac{\varepsilon_r}{\mu_r}}\tau \\
        \omega\raisebox{-1ex}{\tiny\cite{LiLantePerru13a}} &=
        \omega\sqrt{\varepsilon_0\mu_0}\\
        E\raisebox{-1ex}{\tiny\cite{LiLantePerru13a}} &=
        \frac{1}{\sqrt{\varepsilon_r}}E,~
        \vH\raisebox{-1ex}{\tiny\cite{LiLantePerru13a}} =
        \frac{1}{\sqrt{\mu_r}}\vH\\
      \end{aligned}
    }$
    \\ 
    \hline
    & & \\
    $\displaystyle{
      \begin{gathered}
        \text{\cite{NguyePeraiCockb11a}}
        \\
        \text{Maxwell case}
      \end{gathered}
    }$
    & 
    $\displaystyle{
      \begin{gathered}
        \mu \vec w\raisebox{-1ex}{\tiny\cite{NguyePeraiCockb11a}} -
        \curl \vu\raisebox{-1ex}{\tiny\cite{NguyePeraiCockb11a}} =
        \vec 0
        \\
        \curl \vw\raisebox{-1ex}{\tiny\cite{NguyePeraiCockb11a}}
        -\veps \omega^2
        \vu\raisebox{-1ex}{\tiny\cite{NguyePeraiCockb11a}} =\vec 0
        \\
        \hat w\raisebox{-1ex}{\tiny\cite{NguyePeraiCockb11a}} =
        \vw\raisebox{-1ex}{\tiny\cite{NguyePeraiCockb11a}} +
        \tau\raisebox{-1ex}{\tiny\cite{NguyePeraiCockb11a}} (
        \vu\raisebox{-1ex}{\tiny\cite{NguyePeraiCockb11a}} - \hat
        u\raisebox{-1ex}{\tiny\cite{NguyePeraiCockb11a}} ) \times \vn
      \end{gathered}
    }$
    & 
    $\displaystyle{
      \begin{aligned}
        \tau\raisebox{-1ex}{\tiny\cite{NguyePeraiCockb11a}} & = \ii
        \,\sqrt{ \frac{\varepsilon \omega^2}{\mu} } \, \tau
        \\
        \mu \vw\raisebox{-1ex}{\tiny\cite{NguyePeraiCockb11a}} & =-\ii
        k \vH,\text{ with }k = \omega \sqrt{\mu\varepsilon},
        \\
        \vu\raisebox{-1ex}{\tiny\cite{NguyePeraiCockb11a}} & =\vE
      \end{aligned}
    }$
    \\
    \hline
  \end{tabular}
  \caption{Comparison with some HDG formulations in other papers. 
    Notations in the indicated external references are used after
    subscripting them by the reference number. Notations without 
    subscripts are those defined in this paper. 
  }
  \label{tab:comp}
\end{table}

One of the main reasons to use an HDG method is that all interior
unknowns ($\vu, \phi$) can be eliminated to get a global system for
solely the interface unknowns ($\hat\phi$). This is possible whenever
the local system
\begin{subequations}
  \label{eq:6}
  \begin{align}
    \ii k ( \vu, \vv)_K - (\phi, \dive \vv)_K & = - \ip{\hat \phi,
      \vv\cdot\vn}_{\d K}, && \forall \vv \in V(K),
    \\  \nonumber
    -(\dive \vu, \psi)_K -\ip{\tau \phi, \psi}_{\d K} - \ii k
    (\phi,\psi)_K & = - \ip{ \tau \hat \phi, \psi}_{\d K}
                           \\ 
     & \quad-(f,\psi)_K,&& \forall
    \psi \in W(K),
  \end{align}
\end{subequations}
is uniquely solvable. (For details on this elimination and other
perspectives on HDG methods, see~\cite{CockbGopalLazar09}.) In the lowest order ($p=0$) case, on a
square element $K$ of side length $h$, if we use a basis in the
following order
\[
\vu_1 =
\begin{bmatrix}
  1 \\ 0
\end{bmatrix},\quad \vu_2 =
\begin{bmatrix}
  0 \\ 1
\end{bmatrix},\quad \phi_1 = 1, \qquad \text{ on } K,
\]
then the element matrix for the system~\eqref{eq:6} is
\begin{align*}
  M & =
  \begin{bmatrix}
    \ii{k}\,h^{2}&0&0\\
    &\ii{k}\,h^{2}&0\\
    0&0&-4\,h\tau-\ii{k}\,h^2
  \end{bmatrix}.
\end{align*}
This shows that if
\begin{equation}
  \label{eq:1}
  4\tau =-\ii {k} h,   
\end{equation}
then $M$ is singular, and so the HDG method will fail.  {\em The usual
  recipe of choosing $\tau=1$ is therefore inappropriate when $k$ is
  complex valued.}

\subsection*{Intermediate case of the 2D Maxwell
  system} \label{sec:2dMax}

It is an interesting exercise to consider the 2D Maxwell system before
going to the full 3D case.  In fact, the HDG method for the 2D Maxwell
system can be determined from the HDG method for the 2D Helmholtz
system. The 2D Maxwell system is
\begin{subequations}
  \label{eq:maxwell2d}
  \begin{align}
    \ii k \calE - \scurl \vcalH & = -J,
    \\
    \ii k \vcalH + \curl \calE & = 0,
  \end{align}
\end{subequations}
where $J\in L^2(\Omega)$, and the scalar curl $\scurl \cdot$ and the
vector curl $\curl \cdot$ are defined by
\[
\scurl \vcalH = \d_1 \calH_2 - \d_2 \calH_1 = \dive R (\vcalH), \qquad
\curl \calE = (\d_2 \calE, -\d_1 \calE) = R (\grad \calE).
\]
Here $R (v_1,v_2) = (v_2, -v_1)$ is the operator that rotates vectors
counterclockwise by $+\pi/2$ in the plane. Clearly, if we set $\vcalr
= - R \vcalH$, then~\eqref{eq:maxwell2d} becomes
\begin{align*}
  \ii k \calE + \dive \vcalr & = -J,
  \\
  -\ii k \vcalr + R R \grad \calE & = 0,
\end{align*}
which, since $ R R \vv = -\vv$ (rotation by $\pi$), coincides
with~\eqref{eq:Helmholtz} with $\varPhi = \calE$, $\vcalU = \vcalr$,
and $f=-J$.  This also shows that the HDG method for Helmholtz
equation should yield an HDG method for the 2D Maxwell system.  We
thus conclude that there exist stabilization parameters that will
cause the HDG system for 2D Maxwell system to fail.

To examine this 2D HDG method, if we let $\vH$ and $E$ denote the HDG
approximations for $R\vr$ and $\calE$, respectively, then the
HDG system~\eqref{eq:globalHelmholtz} with $\vu$ and $\phi$ replaced
by $-R\vH$ and $E$, respectively, gives
\begin{align*}
  & \sum_{K\in\Th} & -(E, \curl \vw)_K + \ip{ \hat E, \vn \times
    \vw}_{\d K} - \ii k (\vH, \vw)_K & = 0,
  \\
  & \sum_{K\in\Th} & \ii k ( E, \psi)_K - (\curl \vH, \psi)_K +
  \ip{\tau ( E - \hat E), \;\psi}_{\d K} & = -(\vec
  J,\psi)_{\Omega},
  \\
  & \sum_{K\in\Th} & \ip{ \widehat{R\vH} \cdot \vn, \;\hat \psi }_{\d K} & =
  0,
\end{align*} 
for all $\vw \in R (V_h), \psi \in W_h$ and $\hat \psi \in M_h.$ We
have used the fact that $-(R \vH) \cdot \vn = \vH \cdot \vt$, where
$\vt = R\vn$ the tangent vector, and we have used the 2D cross product
defined by $\vv \times \vn = \vv \cdot \vt$. In particular, the
numerical flux prescription~\eqref{eq:numflux} implies
\[
-\widehat{R\vH} \cdot \vn = -R\vH\cdot \vn + \tau ( E - \hat E),
\]
where $\widehat{R\vH}$ denotes the numerical trace of $R \vH$.  We
rewrite this in terms of $\vH$ and $E$, to obtain
\[
\hat H \cdot \vt = \vH \cdot \vt + \tau ( E - \hat E).
\]
One may rewrite this again, as
\begin{equation}
  \label{eq:5}
  \hat H \times \vn = \vH \times \vn  + 
  \tau ( E - \hat E).
\end{equation}
This expression is notable because it will help us consistently
transition the numerical flux prescription from the Helmholtz to the
full 3D Maxwell case discussed next. A comparison of this formula with
those in the existing literature is included in Table~\ref{tab:comp}.

\subsection*{The 3D Maxwell system}   \label{sec:3dmax}

Consider the 3D Maxwell system on $\Omega\subset\mathbb{R}^3$ with a
perfect electrically conducting boundary condition,
\begin{subequations}
  \label{eq:maxwell}
  \begin{align}
    \ii k \vcalE - \curl \vcalH & = -\vec J, \qquad \text{in } \Omega,
    \\
    \ii k \vcalH + \curl \vcalE & = \vec 0, \qquad \text{in } \Omega, \\
    \vn\times \vcalE &= \vec 0, \qquad \text{on } \partial \Omega,
  \end{align}
\end{subequations}
where $\vec J \in (L^2(\Omega))^3$. For this problem, $\Th$ denotes a
cubic or tetrahedral mesh, and $\mathcal{F}_h$ denotes the collection
of mesh faces. The HDG method approximates the exact solution
$(\vcalE, \vcalH, \hat \calE)$ by the discrete solution $(\vE, \vH,
\hat E)\in Y_h \times Y_h \times N_h$. The discrete spaces are defined
by
\begin{align*}
  Y_h&=\{ \vv\in (L^2(\Omega))^3~:~\vv|_K\in Y(K),~\forall K\in\Th\}\\
  N_h&=\{ \hat\eta\in (L^2(\mathcal{F}_h))^3~:~\hat\eta|_F\in
  N(F),~\forall F\in\mathcal{F}_h \text{ and }
  \hat\eta|_{\partial\Omega}=\vec 0\},
\end{align*}
with polynomial spaces $Y(K)$ and $N(F)$ specified by:
\begin{align*}
  &\hspace{-0.7cm}\text{\underline{Tetrahedra}}\qquad\qquad&&\hspace{-0.4cm}\text{\underline{Cubes}}
  \\
  Y(K)&=(\mathcal{P}_p(K))^3 &Y(K)&=(\mathcal{Q}_p(K))^3
  \\
  N(F)&=\{ \hat\eta \in (\mathcal{P}_p(F))^3~:~ \hat\eta\cdot\vn=0 \}
  &N(F)&=\{ \hat\eta \in (\mathcal{Q}_p(F))^3~:~ \hat\eta\cdot\vn=0 \}
  \\
\end{align*}

Our HDG method for~\eqref{eq:maxwell} is
\begin{align*}
  & \sum_{K\in\Th} & \ii k ( \vE, \vv)_K - (\curl \vH, \vv)_K +
  \ip{(\hat H - H) \times \vn, \;\vv}_{\d K} & = -(\vec
  J,\vv)_{\Omega}, && \forall \vv \in Y_{h},
  \\
  & \sum_{K\in\Th} & -(\vE, \curl \vw)_K + \ip{ \hat E, \vn \times
    \vw}_{\d K} - \ii k (\vH, \vw)_K & = 0, && \forall \vw \in Y_{h},
  \\
  & \sum_{K\in\Th} & \ip{ \hat H \times \vn, \;\hat w }_{\d K} & = 0,
  && \forall \hat w \in N_{h},
\end{align*}
where, in analogy with~\eqref{eq:5}, we now set numerical flux by
\begin{equation}
  \label{eq:4}
\hat H \times \vn = \vH \times \vn + \tau ( \vE - \hat E)_t,  
\end{equation}
where $ ( \vE - \hat E)_t$ denotes the tangential component, or
equivalently
\[
\hat H \times \vn = \vH \times \vn + \tau (\vn \times ( \vE - \hat E))
\times \vn.
\]
Note that the 2D system~\eqref{eq:maxwell2d} is obtained from the 3D
Maxwell system~\eqref{eq:maxwell} by assuming symmetry in
$x_3$-direction. Hence, for consistency between 2D and 3D
formulations, we should have the same form for the numerical flux
prescriptions in 2D and 3D.

The HDG method is then equivalently written as
\begin{subequations}
  \label{eq:hdgmaxwell}
  \begin{align}
    & \sum_{K\in\Th} & \ii k ( \vE, \vv)_K - (\curl \vH, \vv)_K +
    \ip{\tau (\vE - \hat E) \times \vn, \;\vv\times \vn}_{\d K} & =
    -(\vec J,\vv)_{\Omega}, 
    \label{eq:hdgmaxwella}\\
    & \sum_{K\in\Th} & -(\vE, \curl \vw)_K + \ip{ \hat E, \vn \times
      \vw}_{\d K} - \ii k (\vH, \vw)_K & = 0,
    \label{eq:hdgmaxwellb}\\
    & \sum_{K\in\Th} & \ip{ \vH + \tau \,\vn \times ( \vE - \hat E) ,
      \;\hat w \times \vn }_{\d K} & = 0, \label{eq:hdgmaxwellc}
  \end{align}
\end{subequations}
for all $\vv, \vw \in Y_{h},$ and
$\hat w \in N_{h}$.  For comparison with other existing formulations,
see Table~\ref{tab:comp}.

Again, let us look at the solvability of the {\em local element
  problem}
\begin{subequations}
  \label{eq:localmaxwell}
  \begin{align}
    \ii k ( \vE, \vv)_K - (\curl \vH, \vv)_K + \ip{\tau \vE \times
      \vn, \;\vv\times \vn}_{\d K} & = \ip{\tau \hat E \times \vn,
      \;\vv\times \vn}_{\d K}   \nonumber                                      
   \\ & -(\vec J,\vv)_K,
    \label{eq:localmaxwella}\\
    -(\vE, \curl \vw)_K - \ii k (\vH, \vw)_K & = - \ip{ \hat E, \vn
      \times \vw}_{\d K}, 
    \label{eq:localmaxwellb}
  \end{align}
\end{subequations}
for all $\vv, \vw \in Y(K)$.
In the lowest order ($p=0$) case, on a cube element $K$ of side length
$h$, if we use a basis in the following order
\begin{equation}\label{eq:basis}
  \vE_1 =
  \begin{bmatrix}
    1 \\ 0 \\ 0
  \end{bmatrix},\quad \vE_2 =
  \begin{bmatrix}
    0 \\ 1 \\ 0
  \end{bmatrix},\quad \vE_3 =
  \begin{bmatrix}
    0 \\ 0 \\ 1
  \end{bmatrix}, \quad \vH_1 =
  \begin{bmatrix}
    1 \\ 0 \\ 0
  \end{bmatrix},\quad \vH_2 =
  \begin{bmatrix}
    0 \\ 1 \\ 0
  \end{bmatrix},\quad \vH_3 =
  \begin{bmatrix}
    0 \\ 0 \\ 1
  \end{bmatrix},
\end{equation}
then the $6 \times 6$ element matrix for the
system~\eqref{eq:localmaxwell} is
\[
M =
\begin{bmatrix}
  (4 h^2 \tau + \ii k h^3)I_3 & 0
  \\
  0 & -(\ii k h^3) I_3
\end{bmatrix},
\]
where $I_3$ denotes the $3\times 3$ identity matrix.
Again, exactly as in the Helmholtz case -- cf.~\eqref{eq:1} -- we find
that if
\begin{equation}
  \label{eq:2}
  4 \tau = -\ii k h,
\end{equation}
then the local static condensation required in the HDG method will
fail in the Maxwell case also.

\subsection*{Behavior on tetrahedral meshes}   \label{sec:tetr}

For the lowest order ($p=0$) case on a tetrahedral element, just as
for the cube element described above, there are bad stabilization
parameter values. Consider, for example, the tetrahedral element of
size $h$ defined by
\begin{equation}
  \label{eq:3}
K=\{\vec x \in \mathbb{R}^3 ~:~ x_j \ge 0~ \forall j,~ x_1+x_2+x_3\leq
h\},
\end{equation}
with a basis ordered as in \eqref{eq:basis}. The element matrix for
the system \eqref{eq:localmaxwell} is then
\[
M=\frac{1}{6}
\begin{bmatrix}
  (2\sqrt{3}+6)h^2\tau+\ii kh^3\!\!\!\!\!& -\sqrt{3}h^2\tau & -\sqrt{3}h^2\tau & 0 & 0 & 0 \\
  -\sqrt{3}h^2\tau & (2\sqrt{3}+6)h^2\tau+\ii kh^3\!\!\!\!& -\sqrt{3}h^2\tau & 0 & 0 & 0 \\
  -\sqrt{3}h^2\tau & -\sqrt{3}h^2\tau & 4 h^2 \tau + \ii k h^3 & 0 & 0 & 0\\
  0 & 0 & 0 & -\ii k h^3 & 0 & 0 \\
  0 & 0 & 0 & 0 & -\ii k h^3 & 0 \\
  0 & 0 & 0 & 0 & 0 & -\ii k h^3
\end{bmatrix}.
\]
We immediately see that the rows become linearly dependent if 
\[
(3\sqrt{3}+6)\tau=-\ii kh.
\]
Hence, for $\tau=-\ii kh / (3\sqrt{3}+6)$, the HDG method
will fail on tetrahedral meshes.

\begin{figure}
  \centering
  \begin{subfigure}[b]{0.40\textwidth}
    \includegraphics[width=\textwidth]{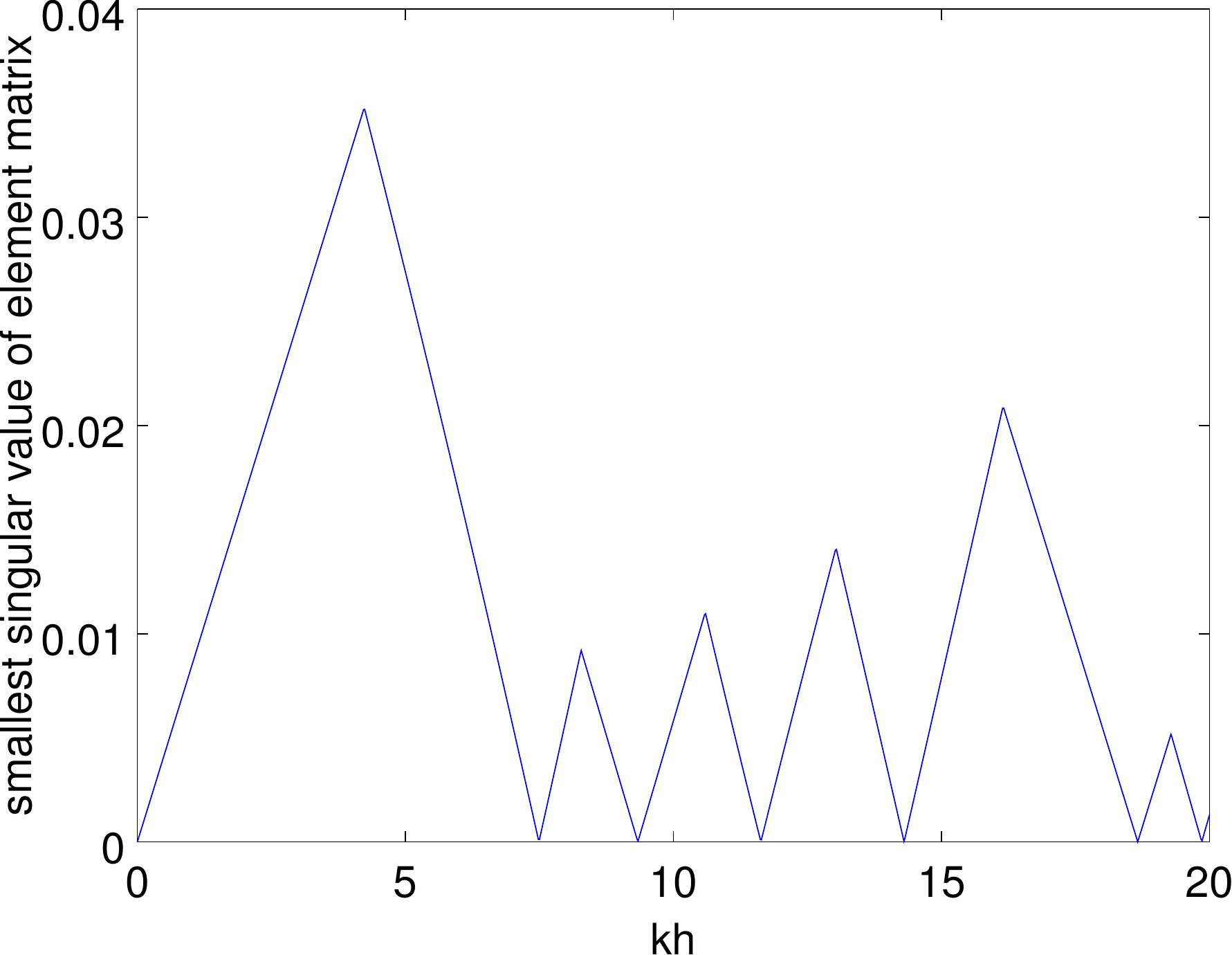}
    \caption{$p=1$, $\tau=-\ii$}
    \label{fig:p1tau-i}
  \end{subfigure}
  \qquad
  \begin{subfigure}[b]{0.40\textwidth}
    \includegraphics[width=\textwidth]{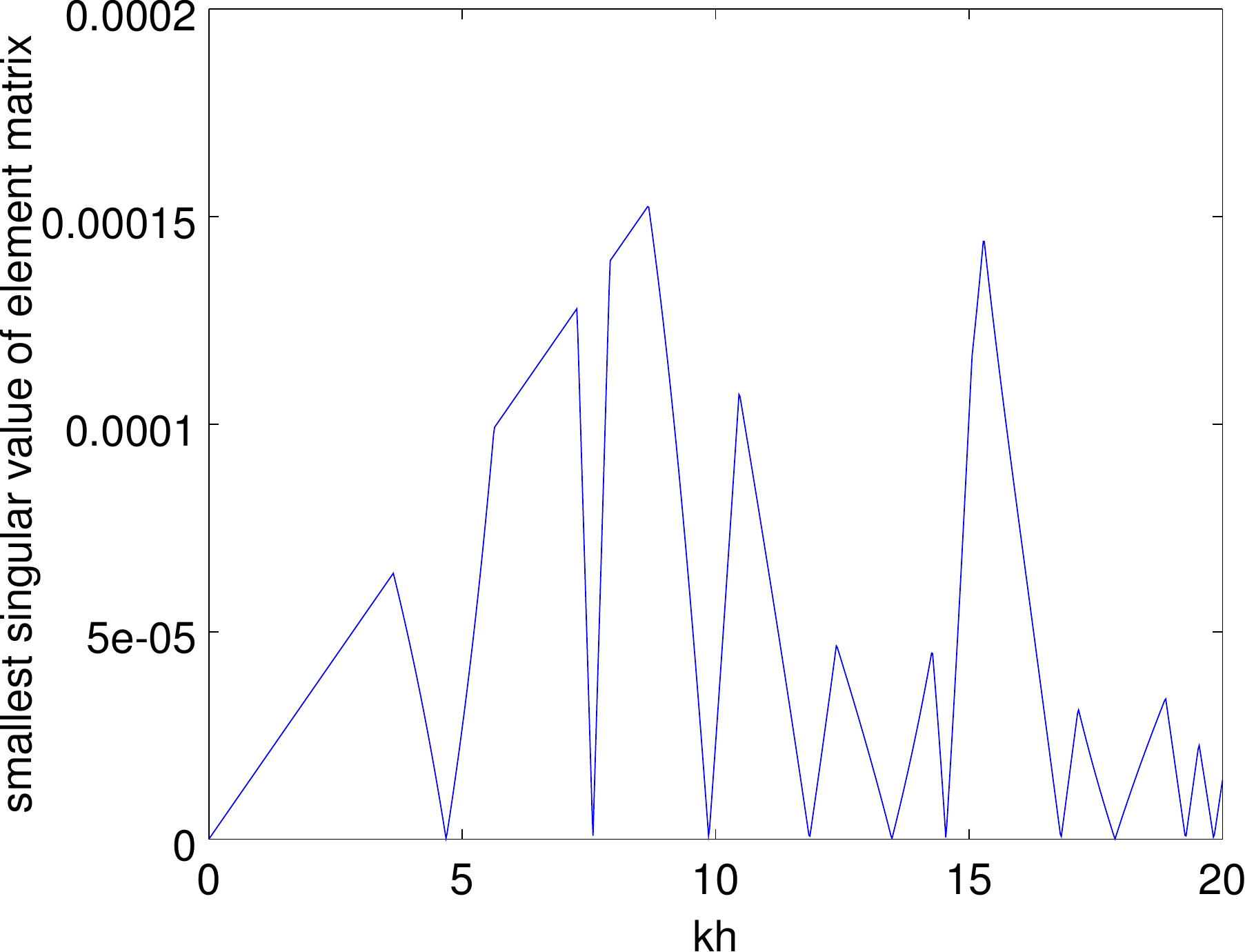}
    \caption{$p=2$, $\tau=-\ii$}
    \label{fig:p2tau-i}
  \end{subfigure}
  \begin{subfigure}[b]{0.40\textwidth}
    \includegraphics[width=\textwidth]{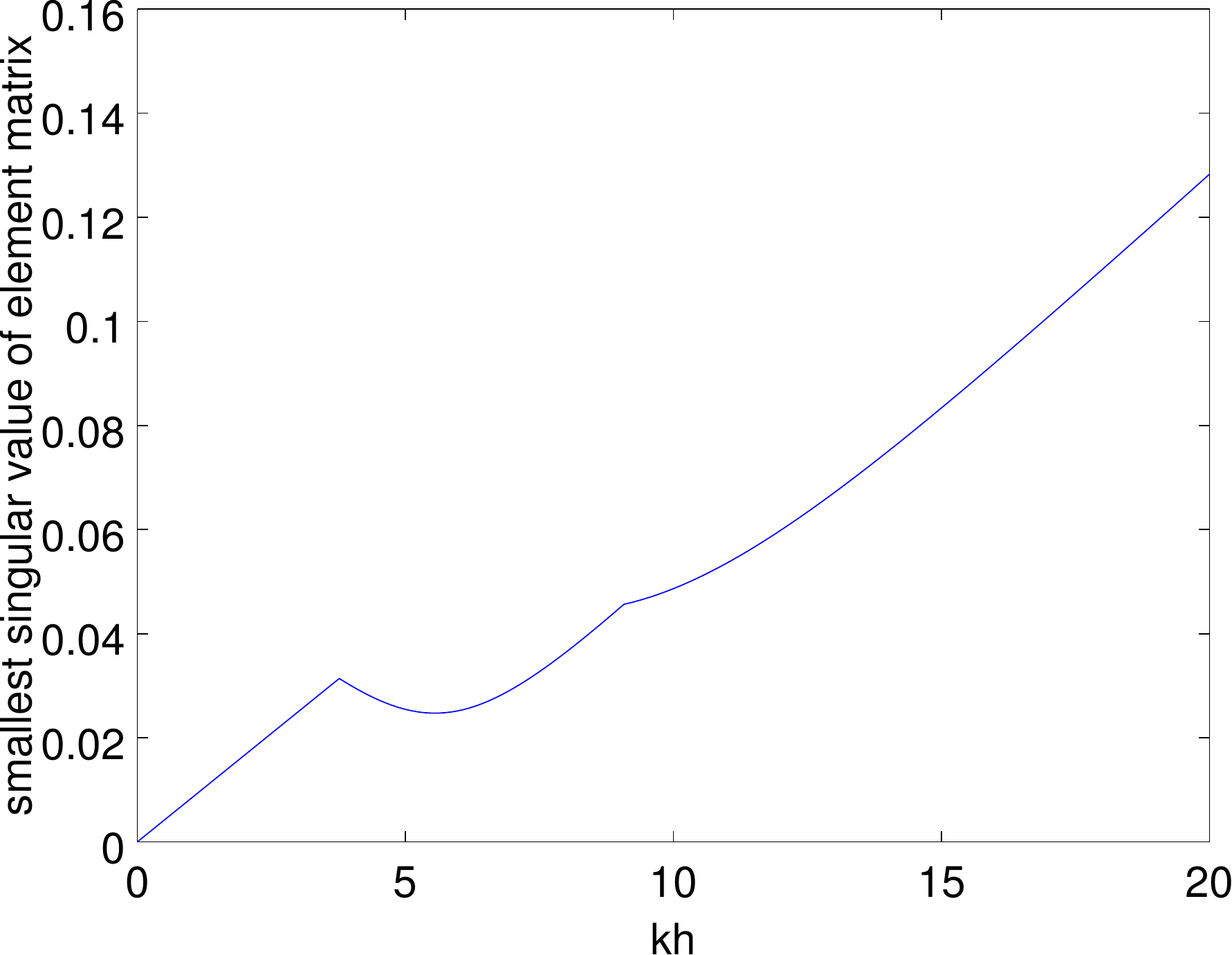}
    \caption{$p=1$, $\tau=1$}
    \label{fig:p1tau1}
  \end{subfigure}
  \qquad
  \begin{subfigure}[b]{0.40\textwidth}
    \includegraphics[width=\textwidth]{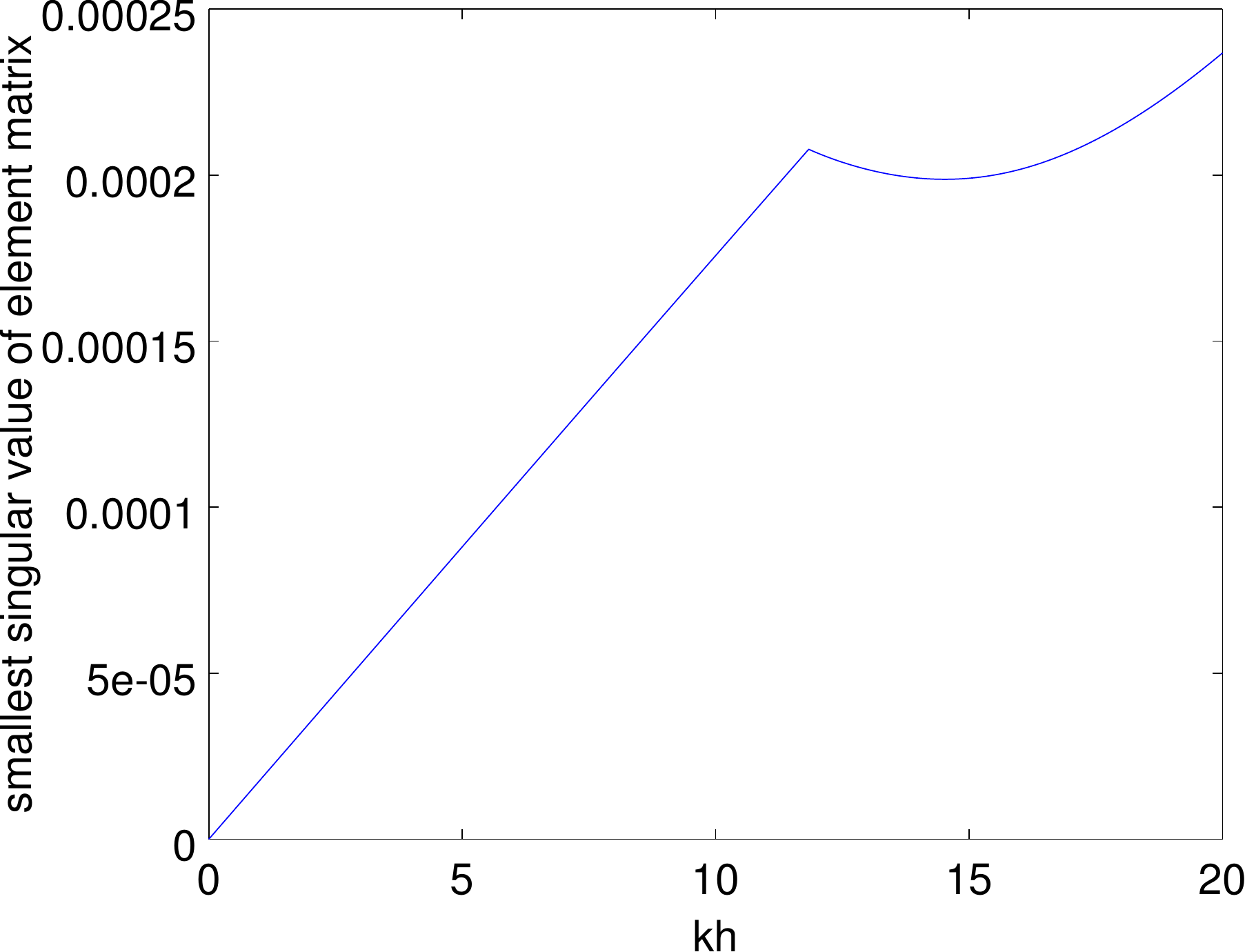}
    \caption{$p=2$, $\tau=1$}
    \label{fig:p2tau1}
  \end{subfigure}
  \centering
  \begin{subfigure}[b]{0.45\textwidth}
    \includegraphics[width=\textwidth]{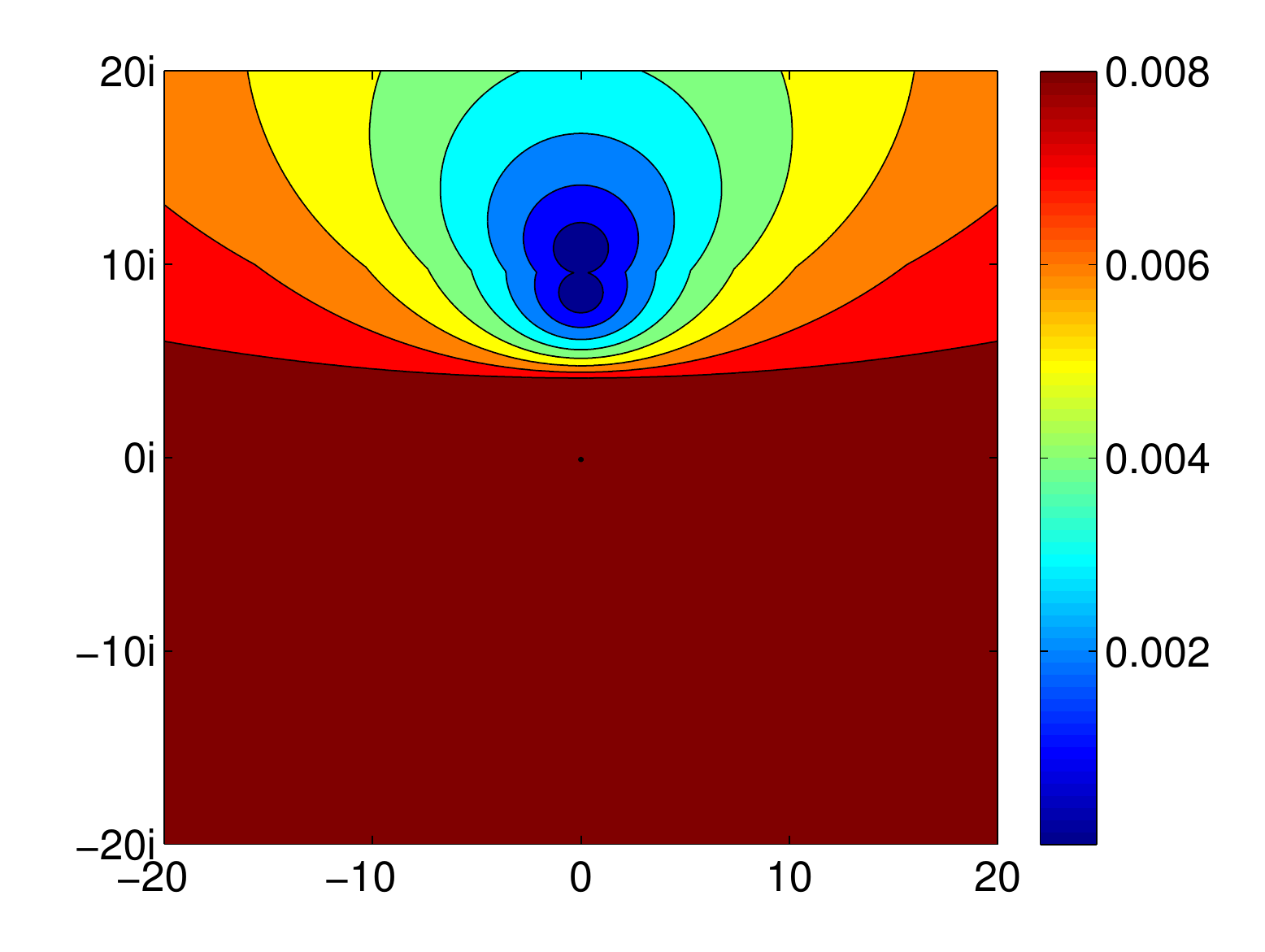}
    \caption{$kh=1$, $p=1$} 
    \label{fig:plane_realkh_1}
  \end{subfigure}
  \quad
  \begin{subfigure}[b]{0.45\textwidth}
    \includegraphics[width=\textwidth]{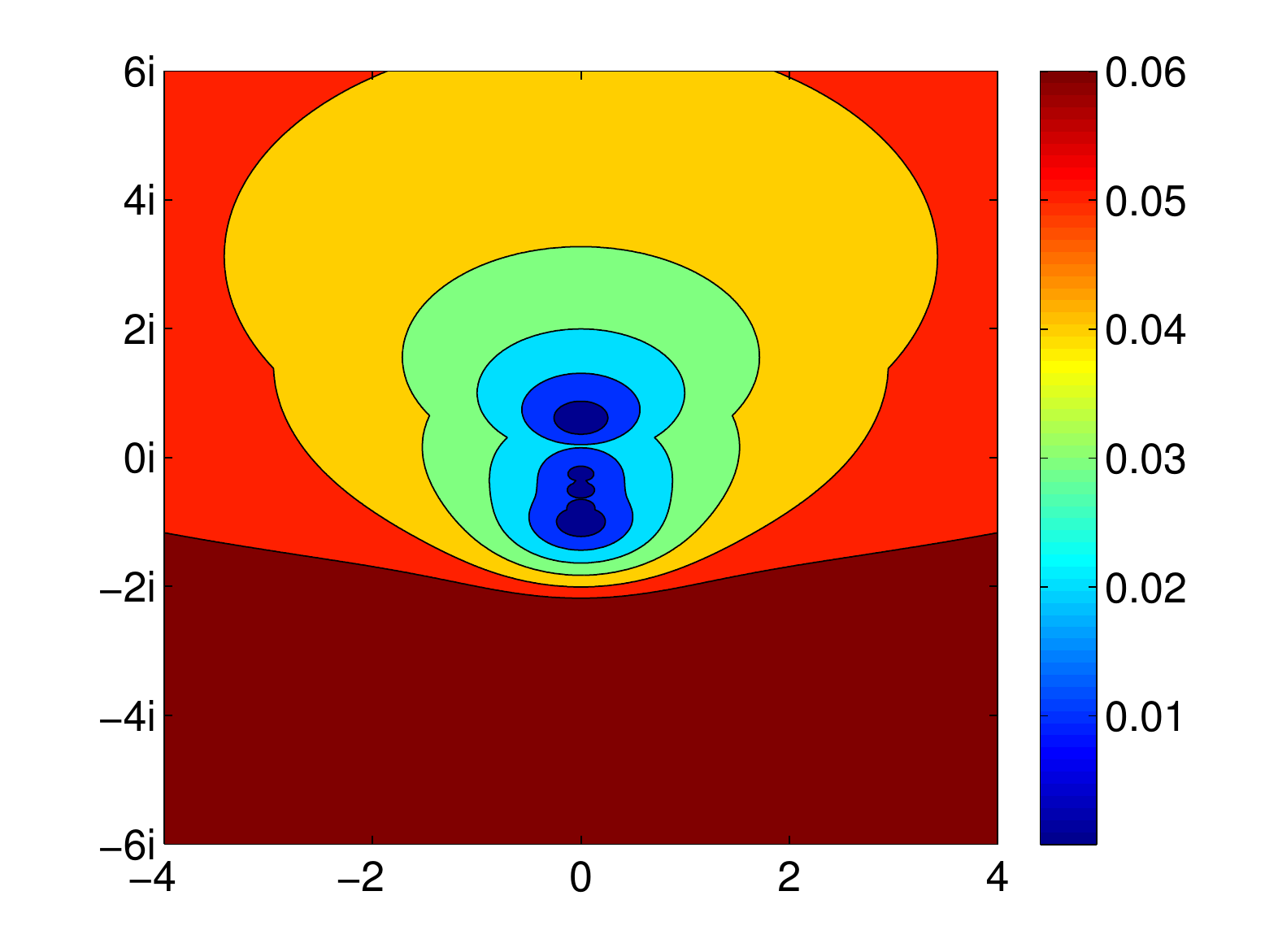}
    \caption{$kh\approx 7.49$, $p=1$}
    \label{fig:plane_realkh_2}
  \end{subfigure}
  \begin{subfigure}[b]{0.45\textwidth}
    \includegraphics[width=\textwidth]{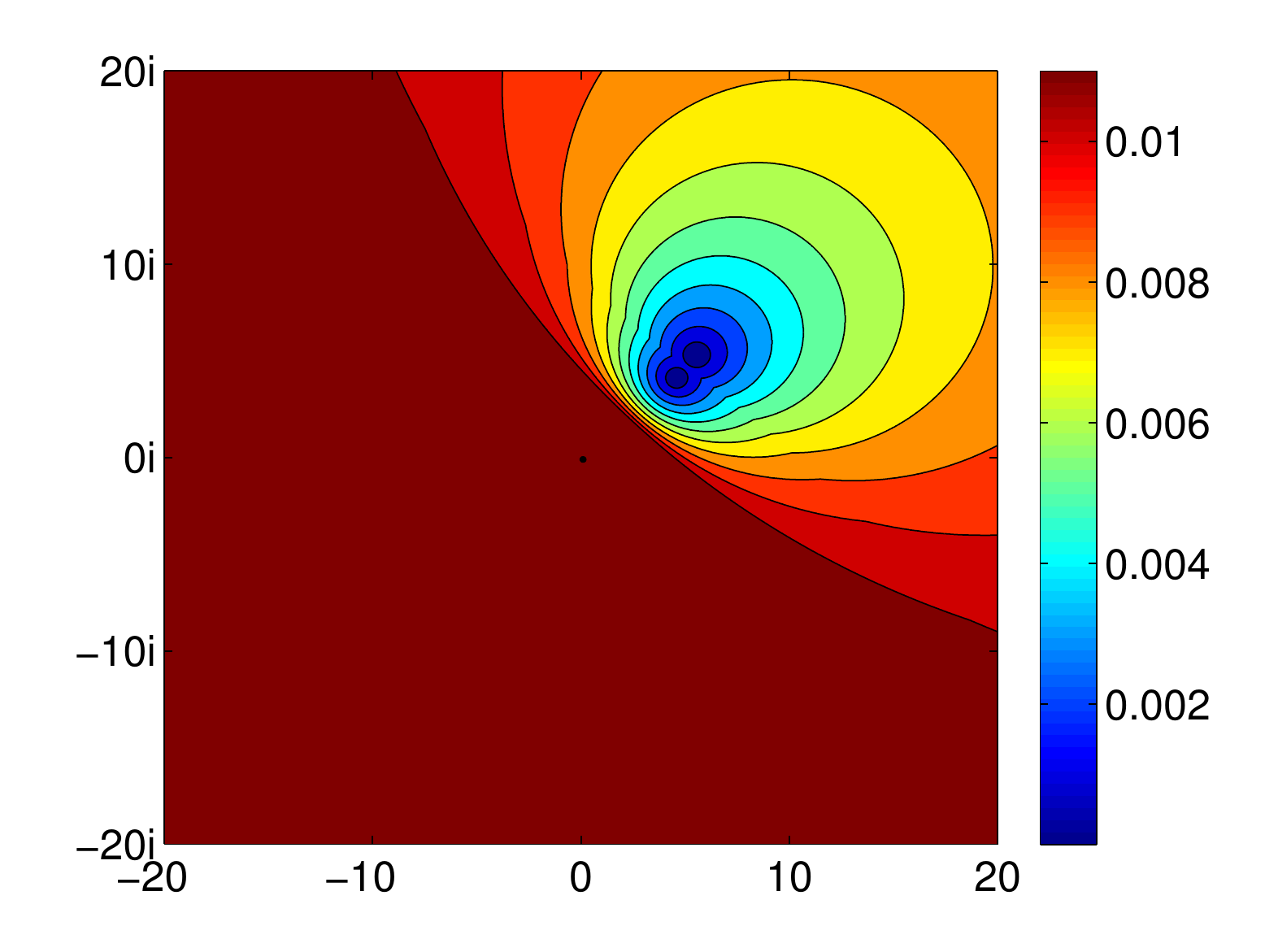}
    \caption{$kh=1+\ii$, $p=1$}
    \label{fig:plane_1plusi_1}   
  \end{subfigure}
  \quad
  \begin{subfigure}[b]{0.45\textwidth} \includegraphics[width=\textwidth]{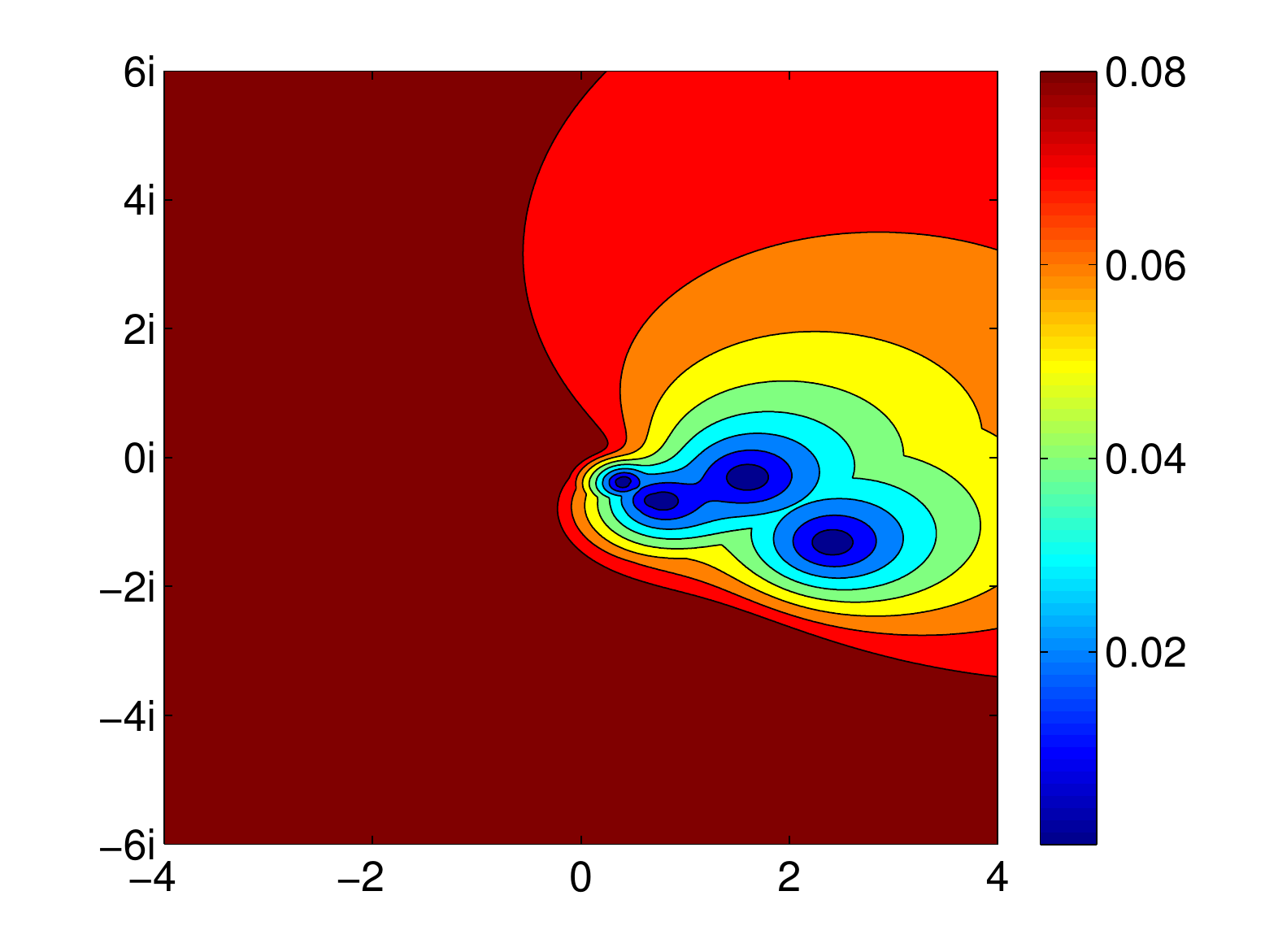}
    \caption{$kh\approx 7.49(1+\ii)$, $p=1$}
    \label{fig:plane_1plusi_2}
  \end{subfigure}
  \caption{The smallest singular values of a tetrahedral HDG element matrix}
  \label{fig:plane}
\end{figure}

For orders $p \geq 1$, the element matrices are too complex to find
bad parameter values so simply. Instead, we experiment
numerically. Setting $\tau=-\ii$, which is equivalent to the choice
made in~\cite{NguyePeraiCockb11a} (see Table~\ref{tab:comp}), we
compute the smallest singular value of the element matrix~$M$ (the
matrix of the left hand side of~\eqref{eq:localmaxwell} with $K$ set
by~\eqref{eq:3}) for a range of normalized wavenumbers
$kh$. Figures~\ref{fig:p1tau-i} and~\ref{fig:p2tau-i} show that, for
orders $p=1$ and $p=2$, there are values of $kh$ for which $\tau=-\ii$
results in a singular value very close to zero.  Taking a closer look
at the first nonzero local minimum in Figure~\ref{fig:p1tau-i}, we
find that the local matrix corresponding to normalized wavenumber
$kh \approx 7.49$ has an estimated condition number exceeding
$3.9\times 10^{15}$, i.e., for all practical purposes, the element matrix is
singular. To illustrate how a different choice of stabilization
parameter $\tau$ can affect the conditioning of the element matrix,
Figures~\ref{fig:p1tau1} and~\ref{fig:p2tau1} show the smallest
singular values for the same range of $kh$, but with $\tau=1$. Clearly
the latter choice of $\tau$ is better than the former.

From another perspective, Figure~\ref{fig:plane_realkh_1} shows the
smallest singular value of the element matrix as $\tau$ is varied in
the complex plane, while fixing $kh$ to $1$.
Figure~\ref{fig:plane_realkh_2} is similar except that we fixed $kh$
to the value discussed above, approximately $7.49$.  In both cases, we
find that the worst values of $\tau$ are along the imaginary axis.
Finally, in Figures~\ref{fig:plane_1plusi_1}
and~\ref{fig:plane_1plusi_2}, we see the effects of multiplying these
real values of $kh$ by $1+\ii$. The region of the complex plane where
bad values of $\tau$ are found changes significantly when $kh$ is
complex.

\section*{Results on unisolvent stabilization}

We now turn to the question of how we can choose a value for the
stabilization parameter $\tau$ that will guarantee that the local
matrices are not singular.  The answer, given by a condition on
$\tau$, surprisingly also guarantees that the global condensed HDG
matrix is nonsingular. These results are based on a tenuous stability
inherited from the fact nonzero polynomials are never waves, stated
precisely in the ensuing lemma. Then we give the condition on $\tau$
that guarantees unisolvency, and before concluding the section,
present some caveats on relying solely on this tenuous stability.

As is standard in all HDG methods, the unique solvability of the
element problem allows the formulation of a condensed global problem
that involves only the interface unknowns. We introduce the following
notation to describe the condensed systems. First, for Maxwell's
equations, for any $\eta\in N_h$, let $(\vE^{\eta},\vH^{\eta})\in
Y_h\times Y_h$ denote the fields such that, for each $K\in\Th$, the
pair $(\vE^{\eta}|_K,\vH^{\eta}|_K)$ satisfies the local
problem~\eqref{eq:localmaxwell} with data $\eta|_{\partial K}$. That
is,
\begin{subequations}
  \label{eq:rhseta}
  \begin{align}
    \ii k ( \vE^{\eta}, \vv)_K - (\curl \vH^{\eta}, \vv)_K + \ip{\tau
      \vE^{\eta} \times \vn, \;\vv\times \vn}_{\d K} & = \ip{\tau \eta
      \times \vn, \;\vv\times \vn}_{\d K},
    \label{eq:rhsetaa}\\
    -(\vE^{\eta}, \curl \vw)_K - \ii k (\vH^{\eta}, \vw)_K & = - \ip{
      \eta, \vn \times \vw}_{\d K},
    \label{eq:rhsetab}
  \end{align}
\end{subequations}
for all $\vv \in Y(K),\; \vw \in Y(K).$
If all the sources in~\eqref{eq:hdgmaxwell} vanish, then the {\em
  condensed global problem} for $\hat E \in N_h$ takes the form
\begin{equation}
  \label{eq:reduced} 
  a(\hat E,\eta)=0,\quad\forall\eta\in N_h,
\end{equation}
where
\[
a(\Lambda,\eta)=\sum_{K\in\Th} \langle\hat H^{\Lambda}\times\vn,\eta
\rangle_{\partial K}.
\]
By following a standard procedure~\cite{CockbGopalLazar09} we can
express $a(\cdot,\cdot)$ explicitly as follows:
\begin{align*}
  a(\Lambda,\eta)&=\sum_{K\in\Th}\langle\vH^{\Lambda}\times\vn,\eta\rangle_{\partial K}+\langle(\hat H^{\Lambda}-\vH^{\Lambda})\times\vn,\eta\rangle_{\partial K}\\
  &=\sum_{K\in\Th}\ii\overline{k}(\vH^{\Lambda},\vH^{\eta})_{K}-(\curl\vH^{\Lambda},\vE^{\eta})_{K}+\langle\tau\vn\times(\vn\times(\Lambda-\vE^{\Lambda})),\eta\rangle_{\partial K}\\
  &=\sum_{K\in\Th}\ii\overline{k}(\vH^{\Lambda},\vH^{\eta})_{K}-\ii k(\vE^{\Lambda},\vE^{\eta})_{K}+\langle\tau\vn\times(\Lambda-\vE^{\Lambda}),\vn\times\vE^{\eta}\rangle_{\partial K}\\
  &\qquad\quad-\langle\tau\vn\times(\Lambda-\vE^{\Lambda}),\vn\times\eta\rangle_{\partial K}\\
  &=\sum_{K\in\Th}\ii\overline{k}(\vH^{\Lambda},\vH^{\eta})_{K}-\ii
  k(\vE^{\Lambda},\vE^{\eta})_{K}-\tau\langle\vn\times(\Lambda-\vE^{\Lambda}),\vn\times(\eta-\vE^{\eta})\rangle_{\partial
    K}.
\end{align*}
Here we have used the complex conjugate of~\eqref{eq:rhsetab} with
$\vw=\vH^{\Lambda}$, along with the definition of $\hat H^{\Lambda}$,
and then used~\eqref{eq:rhsetaa}. 

Similarly, for the Helmholtz equation, let $(\vu^{\eta},
\phi^{\eta})\in V_h\times W_h$ denote the fields such that, for all
$K\in\Th$, the functions $(\vu^{\eta}|_K, \phi^{\eta}|_K)$ solve the
element problem~\eqref{eq:6} for given data $\hat \phi = \eta$. If the
sources in~\eqref{eq:globalHelmholtz} vanish, then the {\em condensed
  global problem} for $\hat \phi \in M_h$ is written as
\begin{equation}\label{eq:reducedHelmholtz}
  b(\hat \phi, \eta)=0,\quad\forall\eta\in M_h, 
\end{equation}
where the form is found, as before, by the standard procedure:
\begin{align*}
  b(\Lambda,\eta)&= \sum_{K\in\Th}\langle \hat u^{\Lambda}\cdot\vn,\eta \rangle_{\d K}\\
  &=\sum_{K\in\Th}\ii\overline{k}(\vu^{\Lambda},\vu^{\eta})_K-\ii
  k(\phi^{\Lambda},\phi^{\eta})_K-\tau\langle \Lambda-\phi^{\Lambda},
  \eta-\phi^{\eta} \rangle_{\d K}.
\end{align*}
The sesquilinear forms $a(\cdot,\cdot)$ and $b(\cdot, \cdot)$ are used
in the main result, which gives sufficient conditions for the
solvability of the local
problems~\eqref{eq:localmaxwell},~\eqref{eq:6} and the global
problems~\eqref{eq:reduced},~\eqref{eq:reducedHelmholtz}.

Before proceeding to the main result, we give a simple lemma,
which roughly speaking, says that nontrivial harmonic {\em waves are
  not polynomials}.

\begin{lemma}\label{lem:trivial}
  Let $p\geq 0$ be an integer, $0\neq k\in\mathbb{C}$, and $D$ an open
  set.  Then, there is no nontrivial $\vE\in (\mathcal{P}_p(D))^3$
  satisfying
  \[
  \curl( \curl \vE ) - k^2 \vE = 0
  \]
  and there is no nontrivial $\phi \in \mathcal P_p(D)$ satisfying
  \[
  \Delta \phi + k^2 \phi =0.
  \]
\end{lemma}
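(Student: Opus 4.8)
The plan is to exploit a mismatch of polynomial degrees: the operators $\Delta$ and $\curl(\curl\,\cdot)$ strictly lower the degree of a polynomial, whereas multiplication by $k^2$ (with $k\neq 0$) preserves it. Note first that since $D$ is open, the hypotheses really assert \emph{identities of polynomials}: a polynomial vanishing on an open set is identically zero, so $\Delta\phi+k^2\phi=0$ and $\curl(\curl\vE)-k^2\vE=0$ hold as polynomial identities, and in particular may be iterated.

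For the scalar claim I would argue by contradiction. Suppose $0\neq\phi\in\mathcal P_p(D)$ solves $\Delta\phi+k^2\phi=0$, i.e.\ $\Delta\phi=-k^2\phi$. Applying $\Delta$ repeatedly gives $\Delta^n\phi=(-k^2)^n\phi$ for every $n\ge 1$. Each application of $\Delta$ drops the degree by at least two (with the convention that the zero polynomial has degree $-\infty$), so choosing $n$ with $2n>p\ge\deg\phi$ forces $\Delta^n\phi=0$, whence $(-k^2)^n\phi=0$ and therefore $\phi=0$ because $k\neq 0$ — a contradiction. Equivalently, and perhaps more transparently for the write-up, one can just compare top-degree homogeneous parts: writing $\phi=\phi_d+(\text{lower degree terms})$ with $\phi_d\neq 0$ its homogeneous component of top degree $d\le p$, the degree-$d$ homogeneous part of $\Delta\phi+k^2\phi$ equals $k^2\phi_d$ (since $\Delta$ lowers the degree), so $k^2\phi_d=0$, contradicting $k\neq 0$.

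The vector claim follows by the identical argument with $\curl(\curl\,\cdot)$ in place of $\Delta$: the operator $\curl$ sends a polynomial vector field of degree $\le d$ to one of degree $\le d-1$ (each component of the curl is a combination of first-order partial derivatives of the components), hence $\curl\curl$ lowers the degree by at least two, exactly as $\Delta$ does. Iterating $\curl(\curl\vE)=k^2\vE$ yields $(\curl\curl)^n\vE=k^{2n}\vE$, and taking $2n>p$ makes the left-hand side identically zero, so $k^{2n}\vE=0$ and thus $\vE=0$. I do not expect a genuine obstacle here — the statement is elementary — the only points deserving a word of care are the observation that the hypotheses are polynomial identities (justifying iteration and degree comparison) and the bookkeeping for the low-degree cases $d\in\{0,1\}$, where $\Delta\phi$ and $\curl\curl\vE$ simply vanish identically. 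This tenuous rigidity of polynomials is precisely what the subsequent unisolvency result will leverage.
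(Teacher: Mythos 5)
Your proposal is correct and rests on the same idea as the paper's proof: $\Delta$ and $\curl\curl$ strictly lower polynomial degree (by at least two) while multiplication by $k^2\neq 0$ does not, so comparing degrees (or, equivalently, iterating the operator) forces the solution to vanish. The paper's argument is precisely your ``top-degree homogeneous part'' variant, so no substantive difference.
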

\begin{proof}
  We use a contradiction argument.  If $E \not\equiv \vec 0$, then we
  may assume without loss of generality that at least one of the
  components of $\vE$ is a polynomial of degree exactly~$p$. But this
  contradicts $k^2 \vE= \curl( \curl \vE )$ because all components of
  $\curl(\curl \vE)$ are polynomials of degree at most $p-2$. Hence
  $\vE \equiv \vec 0$. An analogous argument can be used for the
  Helmholtz case as well.
\end{proof}

\begin{theorem}\label{thm:1}
  Suppose
  \begin{subequations}\label{eq:conditions}
    \begin{align}
      \re (\tau) & \ne 0,   && \text{ whenever } \im (k) =0, \text{ and }\label{eq:conditiona}\\
      \im (k) \re (\tau) & \le 0, && \text{ whenever } \im (k) \ne
      0.\label{eq:conditionb}
    \end{align}
  \end{subequations}
  Then, in the Maxwell case, the local element
  problem~\eqref{eq:localmaxwell} and the condensed global
  problem~\eqref{eq:reduced} are both unisolvent.  Under the same
  condition, in the Helmholtz case, the local element
  problem~\eqref{eq:6} and the condensed global
  problem~\eqref{eq:reducedHelmholtz} are also unisolvent.
\end{theorem}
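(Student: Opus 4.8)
The plan is to handle all four systems uniformly: each is a square linear system, so it suffices to show that the associated \emph{homogeneous} problem has only the trivial solution. Throughout I would write $k=\re(k)+\ii\,\im(k)$ and $\tau=\re(\tau)+\ii\,\im(\tau)$ and split the argument into the two regimes $\im(k)\neq 0$ and $\im(k)=0$ dictated by \eqref{eq:conditiona}--\eqref{eq:conditionb}.

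\emph{Local problems.} In the Maxwell case I would test the homogeneous form of \eqref{eq:localmaxwell} with $\vv=\vE$ and $\vw=\vH$, conjugate the second equation to cancel the cross term $(\curl\vH,\vE)_K$, and reach the scalar identity
\[
\ii k\,(\vE,\vE)_K-\ii\,\overline{k}\,(\vH,\vH)_K+\tau\,\ip{\vE\times\vn,\vE\times\vn}_{\d K}=0;
\]
testing \eqref{eq:6} with $(\vv,\psi)=(\vu,\phi)$ produces an analogous identity for Helmholtz. Taking real parts, in either case,
\[
\im(k)\bigl((\vE,\vE)_K+(\vH,\vH)_K\bigr)=\re(\tau)\,\ip{\vE\times\vn,\vE\times\vn}_{\d K}
\]
(respectively $\im(k)\bigl((\vu,\vu)_K+(\phi,\phi)_K\bigr)=\re(\tau)\,\ip{\phi,\phi}_{\d K}$). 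If $\im(k)\neq 0$, \eqref{eq:conditionb} forces the two sides to have opposite sign, so both vanish and in particular $\vE=\vH=0$ (resp. $\vu=\phi=0$). If $\im(k)=0$, \eqref{eq:conditiona} forces $\vE\times\vn=0$ on $\d K$ (resp. $\phi=0$ on $\d K$); putting this back into the homogeneous equations annihilates all boundary terms, and since the polynomial test spaces contain $\curl\vH$ and $\curl(\curl\vH)-k^2\vH$ (resp. $\grad\phi$ and $\Delta\phi+k^2\phi$), one extracts the strong relations $\ii k\,\vE=\curl\vH$ and then $\curl(\curl\vH)-k^2\vH=0$ (resp. $\ii k\,\vu=-\grad\phi$ and $\Delta\phi+k^2\phi=0$). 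Lemma~\ref{lem:trivial} now gives $\vH=0$, hence $\vE=0$ (resp. $\phi=0$, hence $\vu=0$).

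\emph{Condensed global problems.} Local unisolvency makes the liftings $(\vE^{\eta},\vH^{\eta})$ and $(\vu^{\eta},\phi^{\eta})$ well defined, so I would use the explicit formulas for $a(\cdot,\cdot)$ and $b(\cdot,\cdot)$ derived above. Given a solution $\Lambda$ of the homogeneous \eqref{eq:reduced} (resp. \eqref{eq:reducedHelmholtz}), taking $\eta=\Lambda$ gives
\[
\sum_{K\in\Th}\ii\,\overline{k}\,(\vH^{\Lambda},\vH^{\Lambda})_K-\ii k\,(\vE^{\Lambda},\vE^{\Lambda})_K-\tau\,\ip{\vn\times(\Lambda-\vE^{\Lambda}),\vn\times(\Lambda-\vE^{\Lambda})}_{\d K}=0
\]
and its Helmholtz counterpart with $\Lambda-\phi^{\Lambda}$ in place of $\vn\times(\Lambda-\vE^{\Lambda})$. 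If $\im(k)\neq0$, the real part of this identity, together with \eqref{eq:conditionb}, forces $\vE^{\Lambda}=\vH^{\Lambda}=0$ on every element, whereupon the identity collapses to $\tau\sum_K\ip{\vn\times\Lambda,\vn\times\Lambda}_{\d K}=0$ and hence (as $\tau\neq0$) $\Lambda=0$. If $\im(k)=0$, \eqref{eq:conditiona} forces $\vn\times(\Lambda-\vE^{\Lambda})=0$ on every $\d K$ (resp. $\Lambda=\phi^{\Lambda}$ on every $\d K$); substituting this back into \eqref{eq:rhseta} (resp. \eqref{eq:6}) makes the boundary terms cancel, so — exactly as for the local problems — after eliminating $\vE^{\Lambda}=\tfrac{1}{\ii k}\curl\vH^{\Lambda}$ (resp. $\vu^{\Lambda}=-\tfrac{1}{\ii k}\grad\phi^{\Lambda}$) and integrating by parts once, one obtains $\curl(\curl\vH^{\Lambda})-k^2\vH^{\Lambda}=0$ (resp. $\Delta\phi^{\Lambda}+k^2\phi^{\Lambda}=0$). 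Lemma~\ref{lem:trivial} then kills $\vH^{\Lambda}$, hence $\vE^{\Lambda}$, hence $\Lambda$ (resp. $\phi^{\Lambda}$, then $\vu^{\Lambda}$, then $\Lambda$).

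\emph{Main obstacle.} The case $\im(k)\neq0$ is the easy half: the sign condition \eqref{eq:conditionb} on its own supplies a coercivity that annihilates the volume parts. The real difficulty is $\im(k)=0$, where the scalar identity controls only a trace and says nothing about the volume norms; there one must genuinely use $\re(\tau)\neq0$ to kill the (tangential) trace and then carefully cancel the resulting boundary contributions — keeping track of the complex conjugates in the sesquilinear forms and of the sign in the vector Green identity $(\curl\vv,\vw)_K=(\vv,\curl\vw)_K+\ip{\vn\times\vv,\vw}_{\d K}$ — before reaching the strong (polynomial) Helmholtz/Maxwell identities to which Lemma~\ref{lem:trivial} applies.
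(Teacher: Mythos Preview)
Your proposal is correct and follows essentially the same route as the paper's proof: test the homogeneous systems with the solution itself, take real parts to obtain the key sign identity, use~\eqref{eq:conditionb} or~\eqref{eq:conditiona} to kill either the volume terms or the trace, and in the latter case reduce to the strong polynomial Maxwell/Helmholtz equation and invoke Lemma~\ref{lem:trivial}. The only cosmetic differences are that the paper subtracts the two tested equations directly (picking up a $2\ii\,\mathrm{Im}(\vE,\curl\vH)_K$ term that disappears upon taking real parts) rather than conjugating one of them first, and that it applies Lemma~\ref{lem:trivial} to $\vE$ rather than $\vH$; your version is arguably a little more explicit in the $\im(k)\neq 0$ global step, where you go back to the full identity to force $\Lambda=0$ after the interior fields vanish.
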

\begin{proof}
  We first prove the theorem for the local problem for Maxwell's
  equations.  Assume~\eqref{eq:conditions} holds and set $\hat E =
  \vec 0$ in the local problem~\eqref{eq:localmaxwell}. Unisolvency
  will follow by showing that $\vE$ and $\vH$ must equal $\vec
  0$. Choosing $\vv=\vE$, and $\vw=\vH$, then
  subtracting~\eqref{eq:localmaxwellb} from~\eqref{eq:localmaxwella},
  we get
  \[
  \ii k \left ( ||\vE||_K^2 + ||\vH||_K^2 \right ) + 2\ii \mathrm{Im}
  (\vE, \curl \vH )_K + \tau ||\vE \times \vn ||_{\partial K}^2 = 0,
  \]
  whose real part is
  \[
  -\mathrm{Im}(k) \left ( ||\vE||_K^2 + ||\vH||_K^2 \right ) +
  \mathrm{Re}(\tau) ||\vE \times \vn ||_{\partial K}^2 = 0.
  \]
  Under condition~\eqref{eq:conditionb}, we immediately have that the
  fields $\vE$ and $\vH$ are zero on
  $K$. Otherwise,~\eqref{eq:conditiona} implies $\vE \times
  \vn|_{\partial K} = 0$, and then~\eqref{eq:localmaxwell} gives
  \begin{align*}
    \ii k \vE - \curl \vH & = 0,
    \\
    \ii k \vH + \curl \vE & = 0,
  \end{align*}
  implying
  \[
  \curl( \curl \vE ) = k^2 \vE.
  \]
  By Lemma~\ref{lem:trivial} this equation has no nontrivial solutions
  in the space $Y(K)$. Thus, the element problem for Maxwell's
  equations is unisolvent.
 
  We prove that the global problem for Maxwell's equations is
  unisolvent by showing that $\hat E=\vec 0$ is the unique solution of
  equation~\eqref{eq:reduced}. This is done in a manner almost
  identical to what was done above for the local problem: 
  First, set $\eta=\hat E$ in equation~\eqref{eq:reduced} and take the
  real part to get 
  \begin{equation}\label{eq:realpart}
    \sum_{K\in\Th} \text{Im}(k)\left (||\vH||^2_{K}+||\vE||^2_{K}\right
    )-\textrm{Re}(\tau)||\vn\times(\hat E-\vE)||^2_{\partial K}=0.
  \end{equation}
  This immediately shows that if condition~\eqref{eq:conditionb}
  holds, then the fields $\vE$ and $\vH$ are zero on
  $\Omega\subset\mathbb{R}^3$ and the proof is finished.  In the case
  of condition~\eqref{eq:conditiona}, we have $\vn\times(\hat
  E-\vE|_{\partial K})=\vec 0$ for all $K$. Using
  equations~\eqref{eq:hdgmaxwell}, this yields
  \[
  [\curl (\curl\vE)]|_K=k^2\vE|_K,
  \]
  so Lemma~\ref{lem:trivial} proves that the fields on element
  interiors are zero, which in turn implies $\hat E=\vec 0$
  also. Thus, the theorem holds for the Maxwell case.

  The proof for the Helmholtz case is entirely analogous.
\end{proof}

Note that even with Dirichlet boundary conditions and real $k$, the
theorem asserts the existence of a unique solution for the Helmholtz
equation. However, the exact Helmholtz problem~\eqref{eq:Helmholtz} is
well-known to be {\em not} uniquely solvable when $k$ is set to one of
an infinite sequence of real resonance values. The fact that the
discrete system is uniquely solvable even when the exact system is
not, suggests the presence of artificial dissipation in HDG
methods. We will investigate this issue more thoroughly in the next
section. 

However, we do not advocate relying on this discrete unisolvency near
a resonance where the original boundary value problem is not uniquely
solvable. The discrete matrix, although invertible, can be poorly
conditioned near these resonances. Consider, for example, the
Helmholtz equation on the unit square with Dirichlet boundary
conditions. The first resonance occurs at $k=\pi\sqrt{2}$. In
Figure~\ref{fig:conditioning}, we plot the condition number
$\sigma_{\rm max}/\sigma_{\rm min}$ of the condensed HDG matrix for a
range of wavenumbers near the resonance $k=\pi\sqrt{2}$, using a small
fixed mesh of mesh size $h=1/4$, and a value of $\tau=1$ that
satisfies~\eqref{eq:conditions}. We observe that although the
condition number remains finite, as predicted by the theorem, it peaks
near the resonance for both the $p=0$ and the $p=1$ cases.  We also
observe that a parameter setting of $\tau=-\ii$ that does not satisfy
the conditions of the theorem produce much larger condition numbers,
e.g., the condition numbers that are orders of magnitude greater than
$10^{10}$ (off axis limits of Figure~\ref{fig:conditioningp1}) for $k$
near the resonance were obtained for $p=1$ and $\tau=-\ii$. To
summarize the caveat, {\em even though the
  condition number is always bounded for values of~$\tau$ that
  satisfy~\eqref{eq:conditions}, it may still be practically
  infeasible to find the HDG solution near a resonance.}

\begin{figure}
  \centering
  \begin{subfigure}[b]{0.45\textwidth}
    \includegraphics[width=\textwidth]{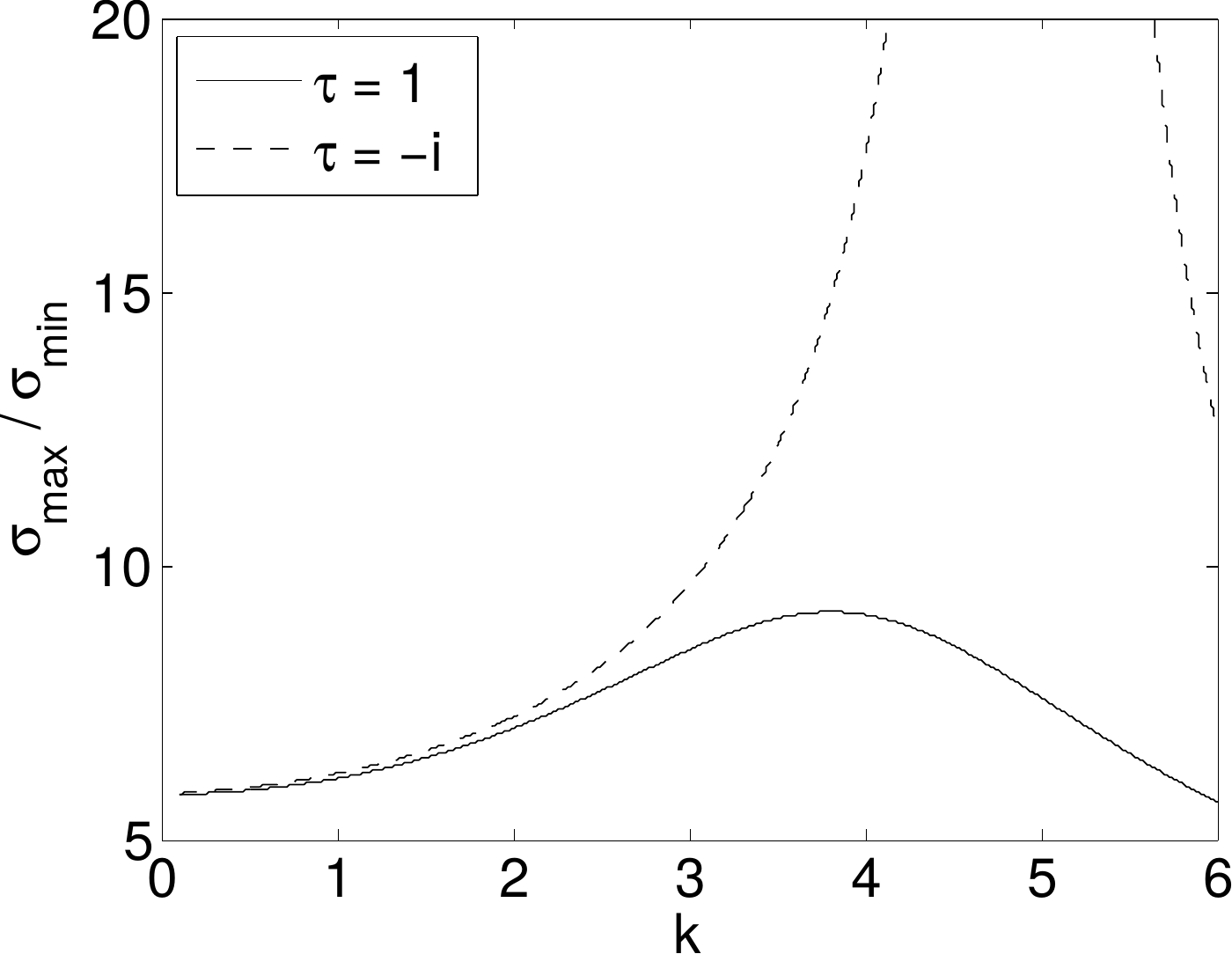}
    \caption{Degree $p=0$}
  \end{subfigure}%
  \quad
  \begin{subfigure}[b]{0.45\textwidth}
    \includegraphics[width=\textwidth]{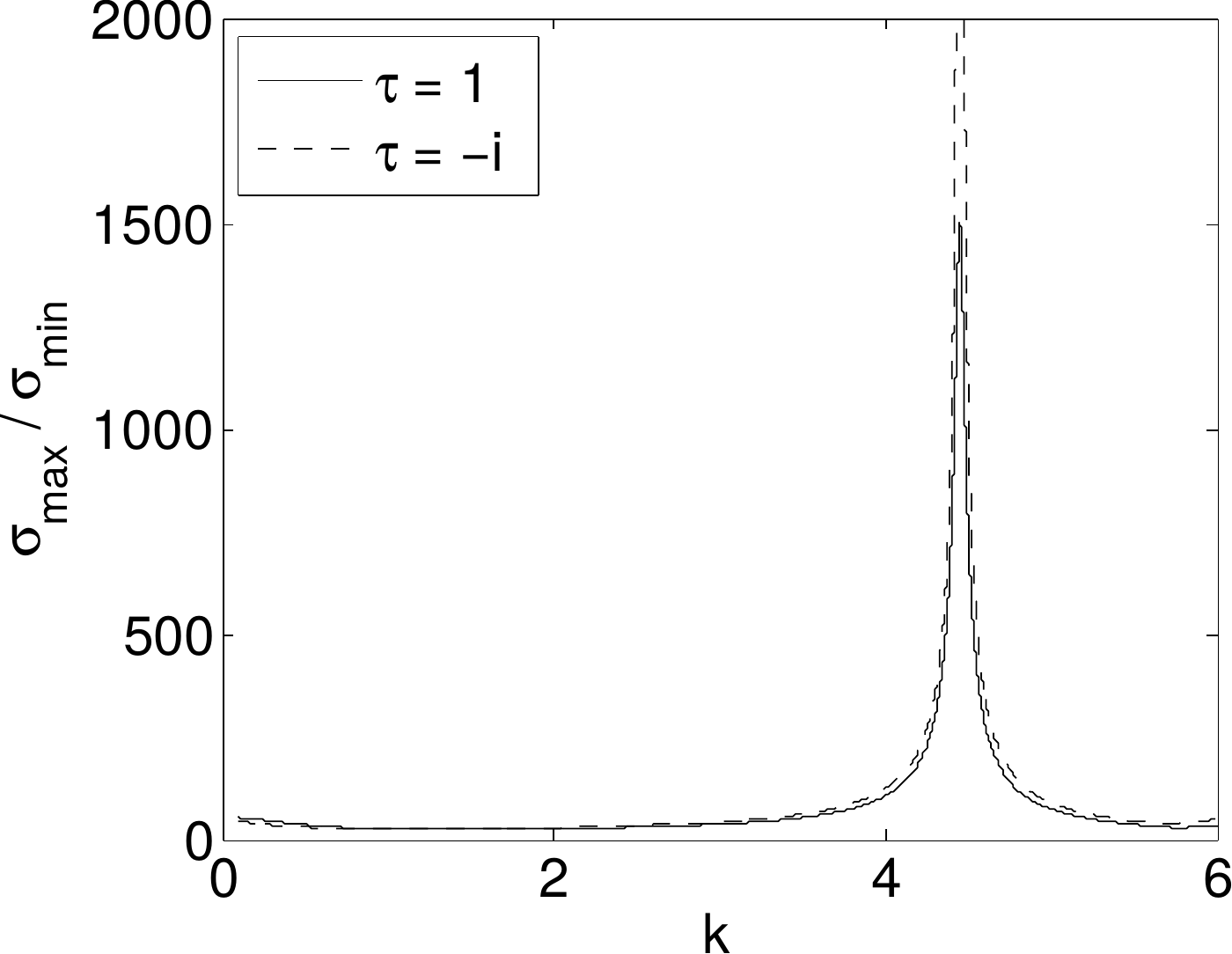}
    \caption{Degree $p=1$}
    \label{fig:conditioningp1}
  \end{subfigure}
  \caption{Conditioning of the HDG matrix for the Helmholtz
    equation near the first resonance $k=\pi\sqrt{2}\approx
    4.44$.
  }
  \label{fig:conditioning}
\end{figure}

\section*{Results of dispersion analysis for real wavenumbers}

When the wavenumber $k$ is {\em complex}, we have seen that it is
important to choose the stabilization parameter~$\tau$ such
that~\eqref{eq:conditionb} holds. We have also seen that when $k$ is
real, the stability obtained by~\eqref{eq:conditiona} is so tenuous
that it is of negligible practical value. For real wavenumbers, it is
safer to rely on stability of the (un-discretized) boundary value
problem, rather than the stability obtained by a choice of $\tau$.

The focus of this section is on {\em real} $k$ and the Helmholtz
equation~\eqref{eq:Helmholtz}.  In this case, having already separated
the issue of stability from the choice of $\tau$, we are now free to
optimize the choice of $\tau$ for other goals. By means of a
dispersion analysis, we now proceed to show that some values of $\tau$
are better than others for minimizing discrepancies in wavespeed.
Since dispersion analyses are limited to the study of propagation 
of plane waves (that solve the Helmholtz equation), we 
will not explicitly consider the Maxwell HDG system in
this section.  However, since we have written the Helmholtz and
Maxwell system consistently with respect to the stabilization
parameter (see the transition from~\eqref{eq:numflux} 
to~\eqref{eq:4} via~\eqref{eq:5}), we anticipate our results for 
the 2D Helmholtz case to
be useful for the Maxwell case also.

\subsection*{The dispersion relation in the one-dimensional case} 
\label{sec:disp1D}

Consider the HDG method~\eqref{eq:globalHelmholtz} in the lowest order
($p=0$) case in one dimension (1D) -- after appropriately interpreting
the boundary terms in~\eqref{eq:globalHelmholtz}.  We follow the
techniques of~\cite{Ainsw04} for performing a dispersion analysis.
Using a basis on a segment of size $h$ in this order
$ u_1=1, \quad \phi_1=1, \quad \hat \phi_1 =1, \quad \hat \phi_2 = 1,
$ the HDG element matrix takes the form $M = \left[\begin{smallmatrix}
    M_{11} & M_{12} \\
    M_{21} & M_{22}
\end{smallmatrix}\right]$ 
where 
\begin{align*}
M_{11}
& = 
\begin{bmatrix}
  \ii kh&0\\
  0& -\ii k h - 2\tau
\end{bmatrix}
&
M_{12} 
& = 
\begin{bmatrix}
  -1 & +1 \\
  \tau & \tau 
\end{bmatrix}
\\
M_{21} 
& = M_{12}^t
&
M_{22} 
& =  
\begin{bmatrix}
  -\tau & 0 \\
  0 & -\tau 
\end{bmatrix}.
\end{align*}
The Schur complement for the two endpoint basis functions
$\{ \hat \phi_1, \hat \phi_2\}$ is then
\[
S = -
\begin{bmatrix}
  \displaystyle \frac{1}{\ii kh} - \frac{\tau^2}{\ii kh + 2 \tau} + \tau 
  & \displaystyle -\frac{1}{\ii kh} - \frac{\tau^2}{\ii kh + 2 \tau}
  \\[2ex]
  \displaystyle -\frac{1}{\ii kh} - \frac{\tau^2}{\ii kh + 2 \tau}
   & \displaystyle \frac{1}{\ii kh} - \frac{\tau^2}{\ii kh + 2 \tau} + \tau 
\end{bmatrix}.
\]
Applying this matrix on an infinite uniform grid (of elements of size
$h$), we obtain the stencil at an arbitrary point. If $\hat\psi_j$
denotes the solution (trace) value at the $j$th point ($j \in \ZZZ$),
then the $j$th equation reads
\[
2\hat \psi_{j} \left( 
  \frac{1}{\ii kh} - \frac{\tau^2}{\ii kh + 2 \tau} + \tau 
\right)
+ (\hat \psi_{j-1} + \hat \psi_{j+1})
\left( -\frac{1}{\ii kh} - \frac{\tau^2}{\ii kh + 2 \tau}
\right) = 0.
\]

In a dispersion analysis, we are interested in how this equation
propagates plane waves on the infinite uniform grid.  Hence,
substituting $\hat \psi_j = \exp( \ii k^h jh)$, we get the following
dispersion relation for the unknown discrete wavenumber $k^h$:
\[
\cos( k^h h) 
\left( \frac{1}{\ii kh} + \frac{\tau^2}{\ii kh + 2 \tau}
\right) 
= \left( 
  \frac{1}{\ii kh} - \frac{\tau^2}{\ii kh + 2 \tau} + \tau 
\right)
\]
Simplifying, 
\begin{equation}
  \label{eq:dispersion1D}
k^h h = 
\cos^{-1} \left( 
1 - 
\frac{ (kh)^{2} }
     {2 + \ii kh ( \tau+\tau^{-1})}
\right).
\end{equation}
This is the {\em dispersion relation} for the HDG method in the lowest
order case in one dimension. Even when $\tau$ and $k$ are real, the
argument of the arccosine is not. Hence
\begin{equation}
  \label{eq:imkh}
  \im(k^h) \ne 0,
\end{equation}
in general, indicating the {\em presence of artificial dissipation in
  HDG methods}. Note however that if $\tau$ is purely imaginary and
$kh$ is sufficiently small,~\eqref{eq:dispersion1D} implies that
$\im(k^h)=0$.

Let us now study the case of small $kh$ (i.e., large number of elements per
wavelength).  As $ kh \to 0$, using the approximation
$\cos^{-1} ( 1 - x^2/2) \approx x + x^3/24 + \cdots$ valid for small $x$,
and simplifying~\eqref{eq:dispersion1D}, we obtain 
\begin{equation}
  \label{eq:kdiff}
  k^h h - kh =
   - \frac{ ( \tau^2 +1 ) \ii } {4\tau} (kh)^2
  + O( (kh)^3).
\end{equation}
Comparing this with the discrete dispersion relation of the standard
finite element method in one space dimension (see~\cite{Ainsw04}),
namely $k^hh - kh \approx O((kh)^3)$, we find that wavespeed
discrepancies from the HDG method can be larger depending on the value
of $\tau$. In particular, we conclude that {\em if we choose
  $\tau = \pm \ii$, then the error $k^hh - kh$ in both methods are of the
  same order $O((kh)^3)$.}

Before concluding this discussion of the one-dimensional case, we note
an alternate form of the dispersion relation suitable for comparison
with later formulas. Using the half-angle formula,
equation~\eqref{eq:dispersion1D} can be rewritten as 
\begin{equation}
  \label{eq:dispersion1Dhalf}
c^2  = 1- \left(\frac{ (kh)^2}{2}\right)
\left(\frac{ \tau }{\ii kh( \tau^2+1) + 2\tau }\right),
\end{equation}
where $ c = \cos(k^h h/2).$

\subsection*{Lowest order two-dimensional case}

In the 2D case, we use an infinite grid of square elements of side
length~$h$. The HDG element matrix associated to the lowest order
($p=0$) case of~\eqref{eq:globalHelmholtz} is now larger, but the
Schur complement obtained after condensing out all interior degrees of
freedom is only $4 \times 4$ because there is one degree of freedom per
edge. Note that horizontal and vertical edges represent two distinct
types of degrees of freedom, as shown in Figures~\ref{fig:stencilA}
and~\ref{fig:stencilB}. Hence there are two types of stencils.

For conducting dispersion analysis with multiple stencils, we follow
the approach in~\cite{DeraeBabusBouil99} (described more generally in
the next subsection). Accordingly, let $C_1$ and $C_2$ denote the
infinite set of stencil centers for the two types of stencils
present in our case.  Then, we get an infinite system of equations for
the unknown solution (numerical trace) values $\hat\psi_{1,\vp_1}$ and
$\hat \psi_{2,\vp_2}$ at all $\vp_1 \in C_1$ and $\vp_2 \in C_2$,
respectively. We are interested in how this infinite system propagates
plane wave solutions in every angle $\theta$. Therefore, with the
ansatz $\hat\psi_{j,\vp_j}=a_j \exp( \ii \vec\kappa_h \cdot \vp_j)$
for constants $a_j$ ($j=1$ and $2$), where the discrete wave vector is
given by
\[
\vec \kappa_h = k^h
\begin{bmatrix}
  \cos\theta \\ \sin \theta
\end{bmatrix}
\]
we proceed to find the relation between the discrete wavenumber $k^h$
and the exact wavenumber~$k$.

Substituting the ansatz into the infinite system of equations and
simplifying, we obtain a $2 \times 2$ system $F [
\begin{smallmatrix}
  a_1 \\ a_2
\end{smallmatrix}
] = 0 $ where
\[
F = \begin{bmatrix} 2\, kh \tau^2 c_1\,c_2 &d_1\, \left( 4\,\tau+\ii
    kh \right) +2\, kh \tau^2 {c_1}^{2}
  \\
  d_2\, \left( 4\,\tau+\ii kh\right) +2\, kh \tau^2 {c_2}^{2} & 2\, kh
  \tau^2 c_1 c_2
\end{bmatrix}
\]
and, for $j=1,2$,
\begin{align}
  \label{eq:8}
  c_j & = \cos\left( \frac 1 2 h \kh_j \right), & d_j & = 2\ii
  (1-c_j^2) - \tau kh, & k_1^h & = k^h \cos \theta, & k_2^h & = k^h
  \sin \theta.
\end{align}
Hence the {\em 2D dispersion relation} relating $k^h$ to $k$ in the
HDG method is
\begin{equation}
  \label{eq:2DdispDetF}
  \det(F)=0.  
\end{equation}

To formally compare this to the 1D dispersion relation, consider these
two sufficient conditions for $\det (F)=0$ to hold:
\begin{equation}
  \label{eq:suff}
  2(kh)^2\tau^2  c_j^2
  + d_j\left(
    2\tau kh +\ii (k_jh)^2 
  \right)= 0,
  \qquad \text{ for } j =1,2,
\end{equation}
where $k_1 = k \cos\theta$ and $k_2 = k \sin\theta$. (Indeed,
multiplying~\eqref{eq:suff}$_j$ by $d_{j+1}$ ($j=1$) or $d_{j-1}$ ($j=2$) and summing over $j=1,2$, one
obtains a multiple of $\det(F)$.) The equations in~\eqref{eq:suff} can be
simplified to
\begin{align}
  \label{eq:2Ddispersion}
  c_j^2 = 1 - \frac{ (k_jh)^2 }{2\ii} \left( \frac{\,kh\,\tau\,} {(k_j
      h)^2 +(kh)^{2}\,{\tau}^{2} - 2\,\ii\,kh\,\tau } \right), \qquad
  j=1,2,
\end{align}
which are relations that have a form similar to the 1D
relation~\eqref{eq:dispersion1Dhalf}. Hence we use asymptotic
expansions of arccosine for small $kh$, similar to the ones used in
the 1D case, to obtain an expansion for $k^h_j$, for $j=1,2$.

The final step in the calculation is the use of the simple identity
\begin{equation}
  \label{eq:7}
k^h = \bigg( (k^h_1)^2 + (k^h_2)^2 \bigg)^{1/2}.
\end{equation}
Simplifying the above-mentioned expansions for each term on the right
hand side above, we obtain
\begin{equation}
  \label{eq:9}
k^hh - kh = \frac{\ii ( \cos( 4\,\theta) +3 + 4\,{\tau}^{2} )
}{16\,\tau } \, (k h)^{2} + O( (kh)^3 )
\end{equation}
as $kh \to 0$.  Thus, we conclude that if we want dispersion errors to
be $O( (kh)^3)$, then we must choose
\begin{equation}
  \label{eq:tautheta}
\tau = \pm \frac 1 2 \ii
\sqrt{\cos( 4\theta) + 3},  
\end{equation}
a prescription that is not very useful in
practice because it depends on the propagation angle $\theta$.
However, we can obtain a more practically useful condition by setting
$\tau$ to be the constant value that best approximates $\pm \frac 1 2
\ii \sqrt{\cos( 4\theta) + 3}$ for all $0 \le \theta \le \pi/2$,
namely
\begin{equation}
  \label{eq:besttau}
  \tau = \pm \ii \frac{\sqrt 3}{2}.
\end{equation}
These values of $\tau$ {\em asymptotically minimize errors in discrete
  wavenumber over all angles for the lowest order 2D HDG method.}
Note that for any purely imaginary $\tau$, \eqref{eq:2Ddispersion}
implies that $k^h_j$ is real if $kh$ is sufficiently small, so 
\begin{equation}
  \label{eq:11}
  \im(k^h) =0,
\end{equation}
thus eliminating artificial dissipation.

\begin{figure}
  \centering
  \begin{subfigure}[b]{0.45\textwidth}
    \includegraphics[width=\textwidth]{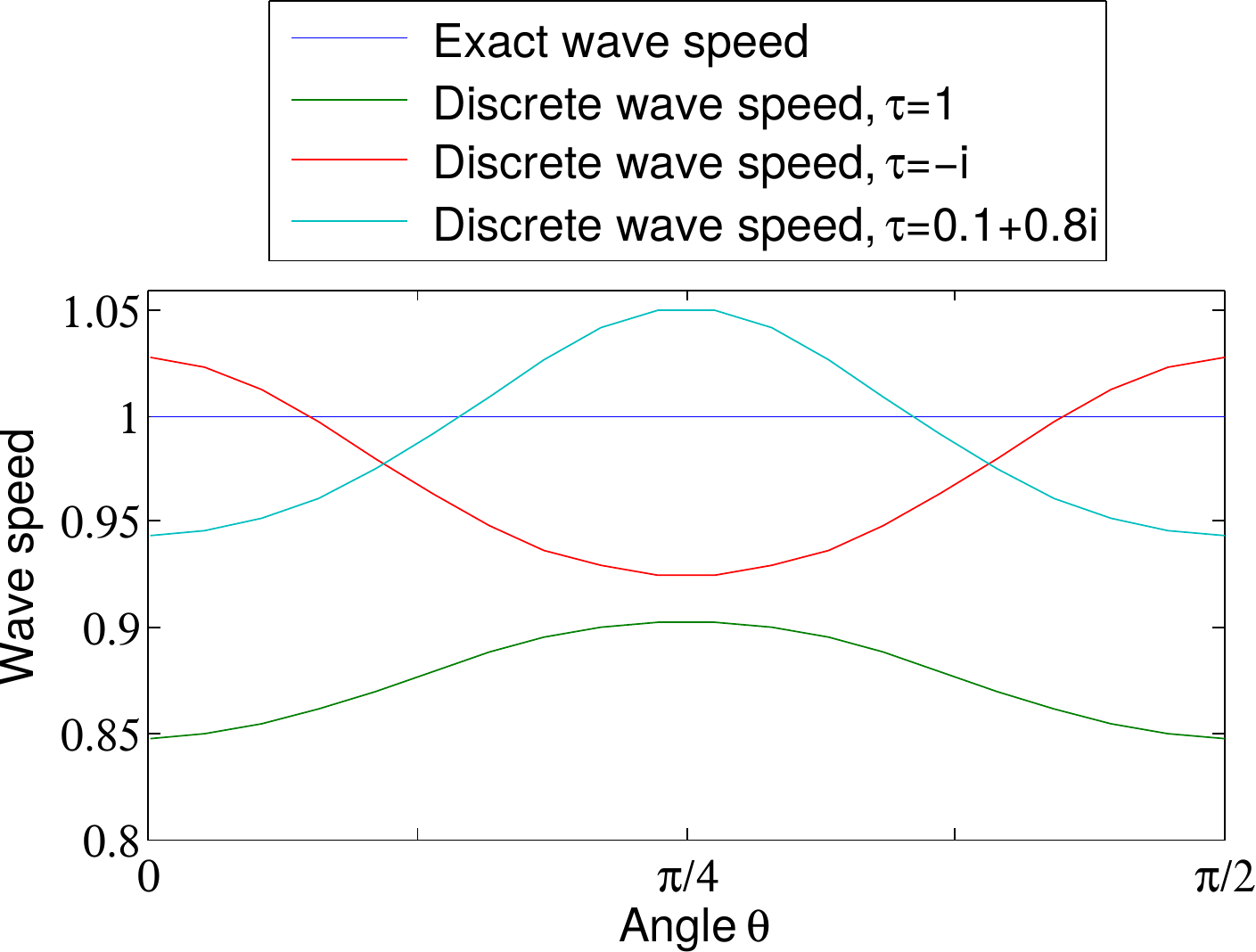}
    \caption{$p=0$}
    \label{fig:circle0}
  \end{subfigure}%
  \quad\quad
  \begin{subfigure}[b]{0.45\textwidth}
    \includegraphics[width=\textwidth]{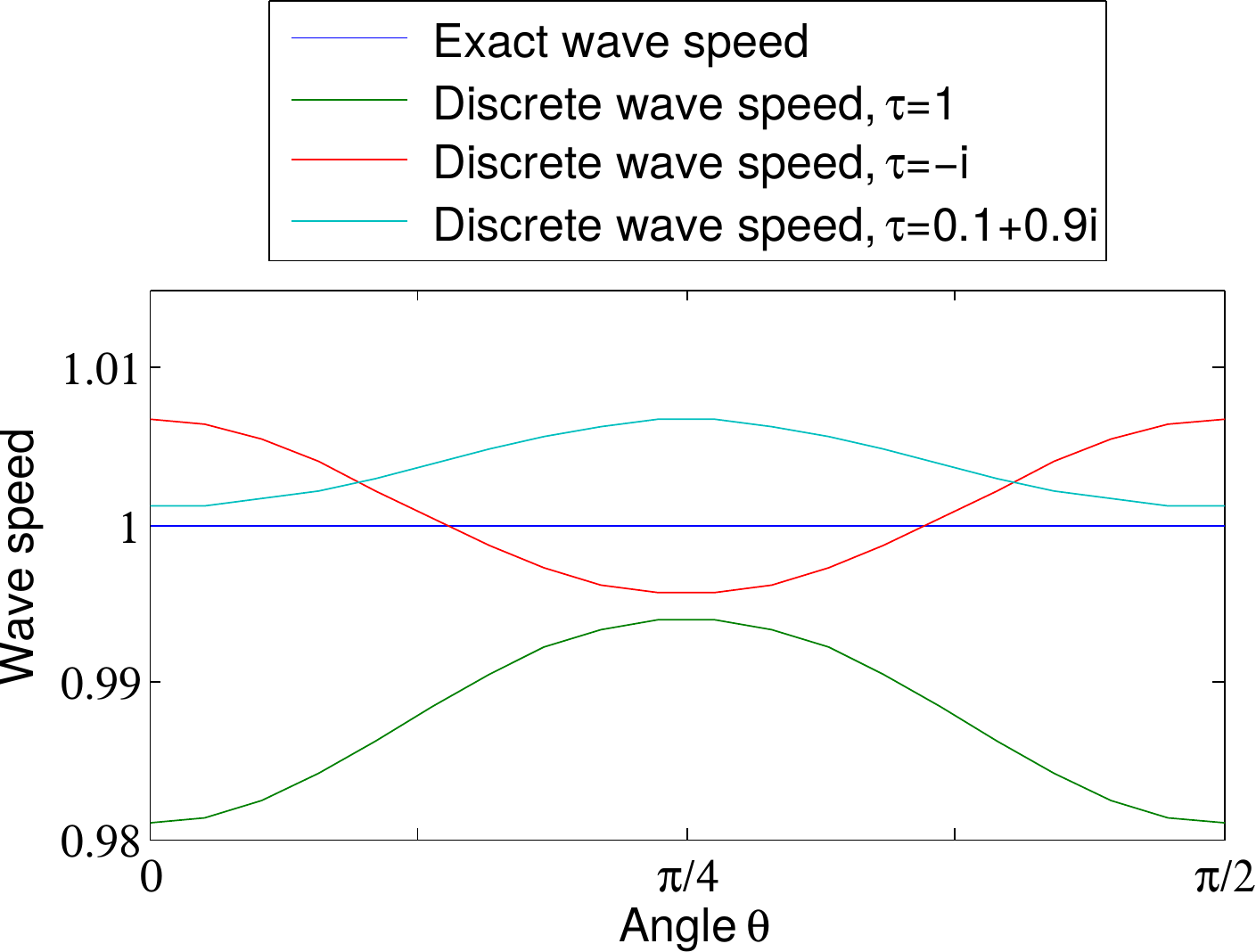}
    \caption{$p=1$}
    \label{fig:circle1}
  \end{subfigure}%
  \caption{Numerical wave speed $\re(\vec k^h(\theta))$ as a function
    of $\theta$ for various choices of $\tau$. Here, $k=1$ and
    $h=\pi/4$.}
  \label{fig:circle}
\end{figure}

We now report results of numerical computation of $k^h = k^h(\theta)$
by directly applying a nonlinear solver to the 2D dispersion
relation~\eqref{eq:2DdispDetF} (for a set of propagation angles
$\theta$). The obtained values of the real part $\re k^h(\theta)$ are
plotted in Figure~\ref{fig:circle0}, for a few fixed values of
$\tau$. The discrepancy between the exact and discrete curves
quantifies the difference in the wave speeds for the computed and the
exact wave.  Next, analyzing the computed $k^h(\theta)$ for values of
$\tau$ on a uniform grid in the complex plane, we found that the
values of $\tau$ that minimize $|kh-k^h(\theta) h|$ are purely
imaginary.  As shown in Figure~\ref{fig:tau_vs_theta}, these
$\tau$-values approach the asymptotic values determined analytically
in~\eqref{eq:tautheta}.  A second validation of our analysis is
performed by considering the maximum error over all $\theta$ for each
value of $\tau$ and then determining the practically optimal value of
$\tau$.  The results, given in Table~\ref{tab:optimtau}, show that the
optimal $\tau$ values do approach the analytically determined value
(see~\eqref{eq:besttau}) of $\pm \ii \frac{\sqrt 3}{2}\approx
\pm 0.866 \ii$.
Further numerical results for the $p=0$ case are presented together with a 
higher order case in the next subsection.

\begin{table}
  \centering
\begin{center}
\begin{tabular}{|c|c|c|}
\hline
$kh$ & Optimal $\tau$,       & Optimal $\tau$,       \\
     & $\mathrm{Im}(\tau)>0$ & $\mathrm{Im}(\tau)<0$ \\
\hline
$\pi/4$ & $0.807\ii$ & $-0.931\ii$\\
$\pi/8$ & $0.837\ii$ & $-0.898\ii$ \\
$\pi/16$ & $0.851\ii$ &$-0.882\ii$ \\
$\pi/32$ & $0.859\ii$ &$-0.874\ii$ \\
$\pi/64$ & $0.863\ii$ & $-0.871\ii$\\
$\pi/128$ & $0.865\ii$ &$-0.868\ii$ \\
$\pi/256$ & $0.866\ii$ & $-0.867\ii$\\
\hline
\end{tabular}
\end{center}

\caption{Numerically found values of $\tau$ that minimize $| kh - k^h(\theta) h|$ for all $\theta$ in the $p=0$ case.}
  \label{tab:optimtau}
\end{table}

\begin{figure}
  \centering
  \begin{subfigure}[b]{0.41\textwidth}
    \includegraphics[width=\textwidth]{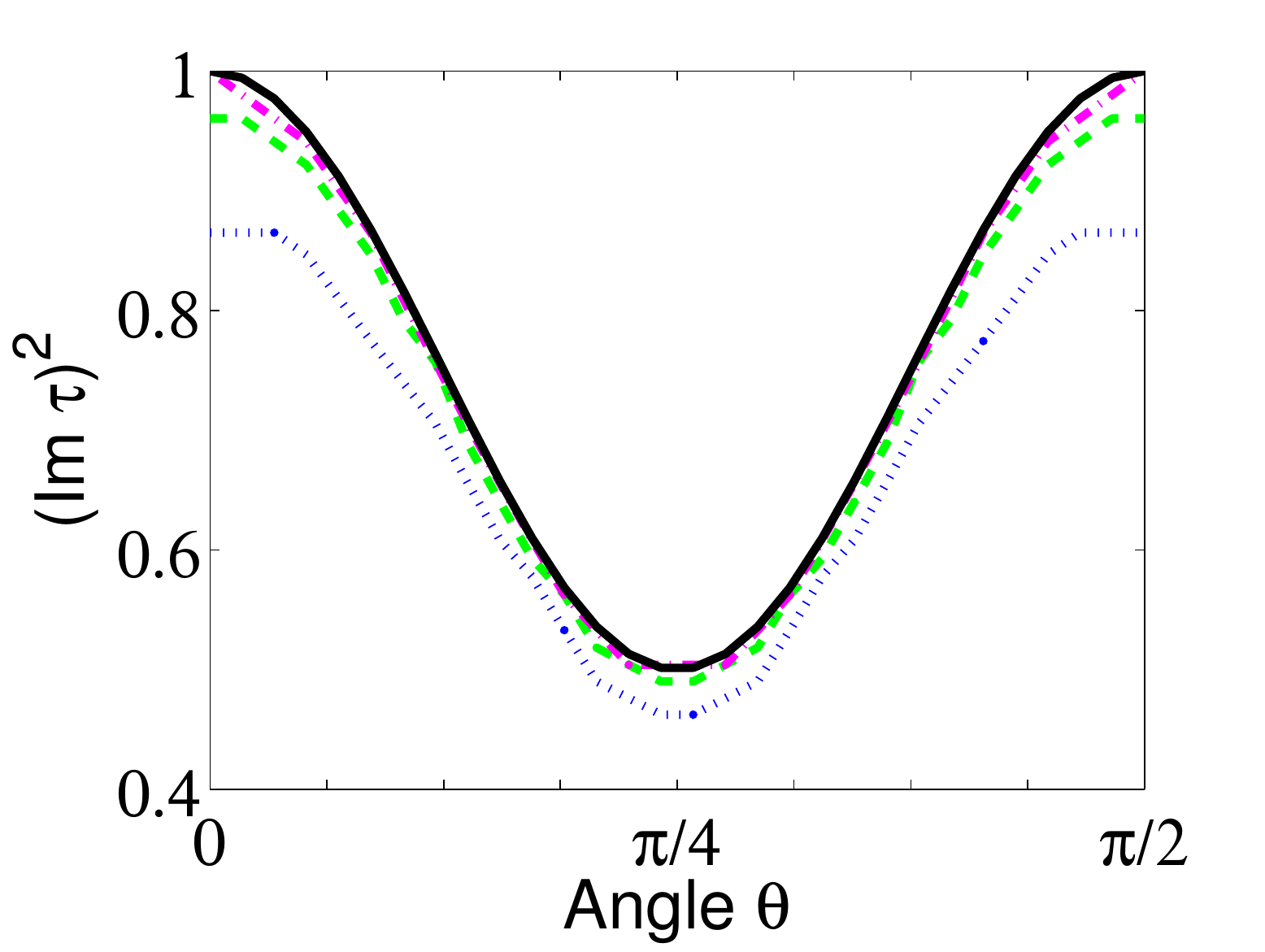}
    \caption{$\mathrm{Im}(\tau) > 0$}
    \label{fig:pos_im}
  \end{subfigure}%
  \begin{subfigure}[b]{0.18\textwidth}
    \includegraphics[width=\textwidth]{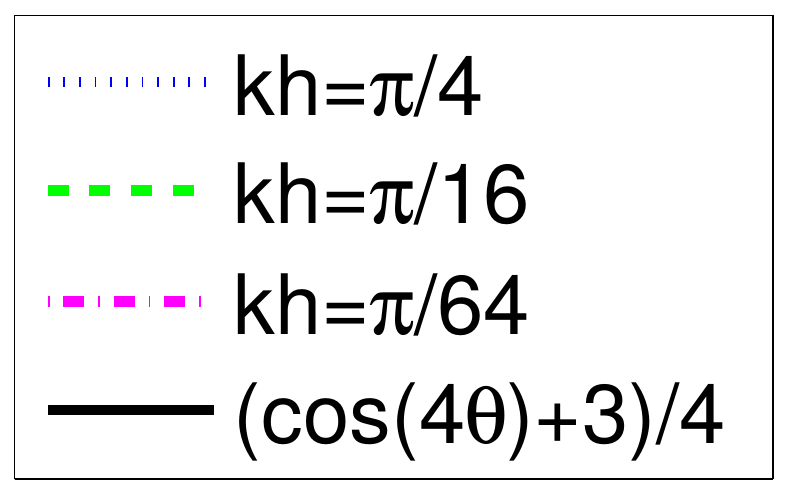}
    \vspace{1.7cm}
  \end{subfigure}%
  \begin{subfigure}[b]{0.41\textwidth}
     \includegraphics[width=\textwidth]{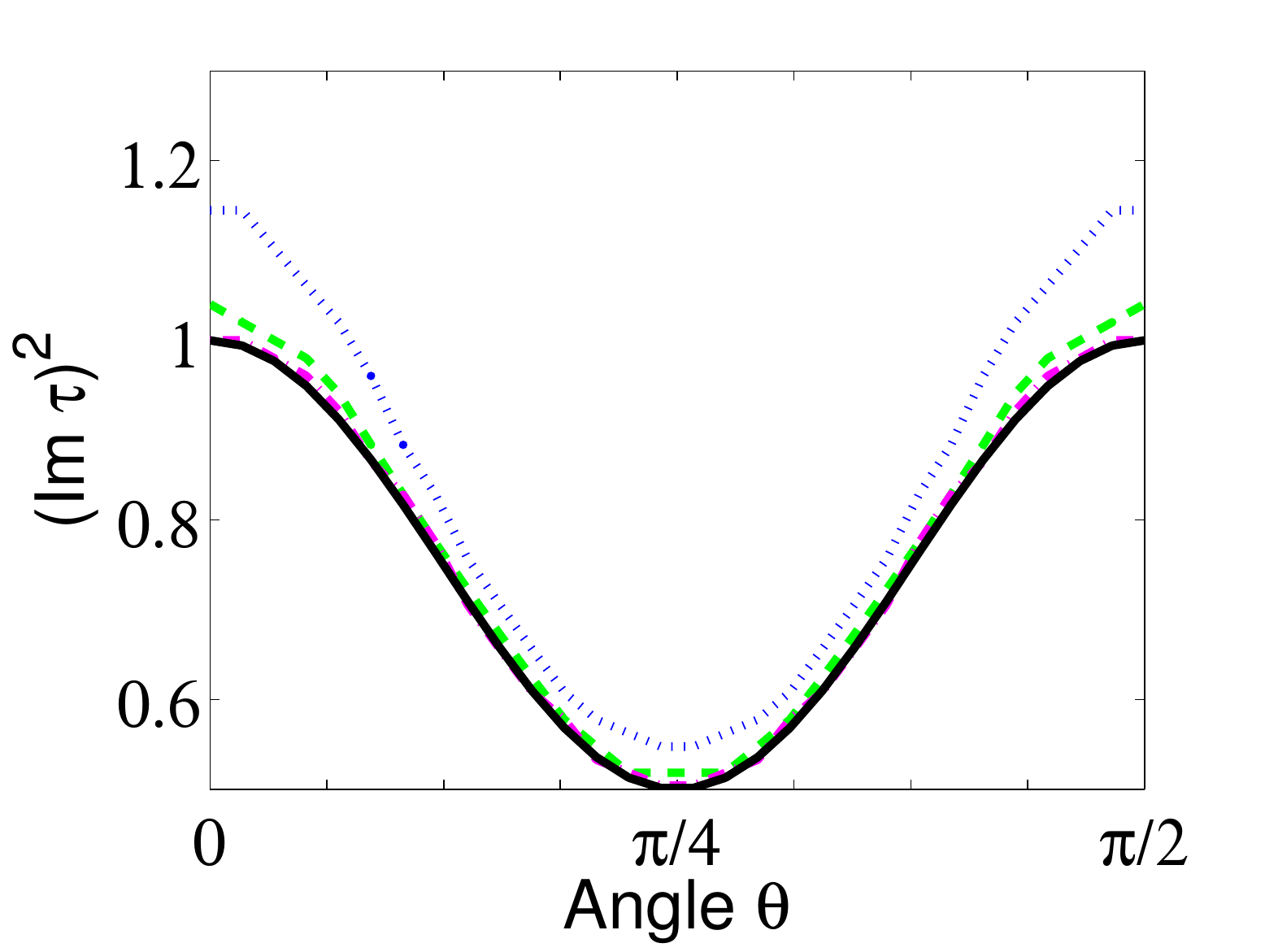}
    \caption{$\mathrm{Im}(\tau) < 0$}
    \label{fig:neg_im}
  \end{subfigure}%
  \caption{The values of $\tau$ that locally minimize $|kh-k^hh|$ are
    purely imaginary. Here, $(\mathrm{Im}(\tau))^2$ is compared with
    asymptotic values (dashed lines).}
  \label{fig:tau_vs_theta}
\end{figure}

\subsection*{Higher order case} \label{ssec:higher}

To go beyond the $p=0$ case, we extend a technique
of~\cite{DeraeBabusBouil99} (as in~\cite{GopalMugaOliva14}).
Using a higher order HDG stencil, we want to 
obtain an analogue of~\eqref{eq:2DdispDetF}, which can be 
numerically solved for 
the discrete wavenumber
$k^h=k^h(\theta)$. The accompanying dispersive, dissipative, and total errors
are defined respectively by
\begin{equation}
  \label{eq:10}
\begin{gathered}  
  \epsilon_{\text{disp}} = \max_{\theta}|\re(k^h(\theta))-k|,\qquad
  \epsilon_{\text{dissip}} = \max_{\theta}|\im(k^h(\theta))|,\;
  \\
  \epsilon_{\text{total}} = \max_{\theta}|k^h(\theta)-k|.
\end{gathered}  
\end{equation}

\begin{figure}
  \begin{subfigure}[b]{0.3\textwidth}
    \centering
    \begin{tikzpicture}
      \draw [step = 1cm,dotted] (0,0) grid (3,4); \draw [step = 1cm]
      (1,1) grid (2,3); \foreach \x in {1,2} { \fill[white] (\x,1.5)
        circle (2.5pt); \fill[white] (\x,2.5) circle (2.5pt); }
      \foreach \y in {1,2,3} { \fill[white] (1.5,\y) circle (2.5pt); }
      \foreach \x in {1,2} { \draw (\x,1.5) circle (2.5pt); \draw
        (\x,2.5) circle (2.5pt); } \foreach \y in {1,2,3} { \draw
        (1.5,\y) circle (2.5pt); } \fill (1.5,2) circle (2.5pt);
    \end{tikzpicture}
    \caption{}
    \label{fig:stencilA}
  \end{subfigure}
  \begin{subfigure}[b]{0.3\textwidth}
    \centering
    \begin{tikzpicture}
      \draw [step = 1cm,dotted] (0,0) grid (4,3); \draw [step = 1cm]
      (1,1) grid (3,2); \foreach \x in {1,2,3} { \fill[white] (\x,1.5)
        circle (2.5pt); } \foreach \y in {1,2} {\fill[white] (1.5,\y)
        circle (2.5pt); \fill[white] (2.5,\y) circle (2.5pt); }
      \foreach \x in {1,2,3} { \draw (\x,1.5) circle (2.5pt); }
      \foreach \y in {1,2} {\draw (1.5,\y) circle (2.5pt); \draw
        (2.5,\y) circle (2.5pt); } \fill (2,1.5) circle (2.5pt);
    \end{tikzpicture}
    \caption{}
    \label{fig:stencilB}
  \end{subfigure}
  \begin{subfigure}[b]{0.3\textwidth}
    \centering
    \begin{tikzpicture}
      \draw [step = 1cm,dotted] (0,0) grid (3,4); \draw [step = 1cm]
      (1,1) grid (2,3); \foreach \x in {1,2} { \fill[white] (\x,1.09)
        circle (2.5pt); \fill[white] (\x,1.91) circle (2.5pt);
        \fill[white] (\x,2.09) circle (2.5pt);\fill[white] (\x,2.91)
        circle (2.5pt);} \foreach \y in {1,2,3} { \fill[white]
        (1.09,\y) circle (2.5pt); \fill[white] (1.91,\y) circle
        (2.5pt);} \foreach \x in {1,2} { \draw (\x,1.09) circle
        (2.5pt); \draw (\x,1.91) circle (2.5pt); \draw (\x,2.09)
        circle (2.5pt);\draw (\x,2.91) circle (2.5pt);} \foreach \y in
      {1,2,3} { \draw (1.09,\y) circle (2.5pt); \draw (1.91,\y) circle
        (2.5pt);} \fill (1.09,2) circle (2.5pt);
    \end{tikzpicture}
    \caption{}
    \label{fig:stencilC}
  \end{subfigure}
  \begin{subfigure}[b]{0.3\textwidth}
    \centering
    \begin{tikzpicture}
      \draw [step = 1cm,dotted] (0,0) grid (3,4); \draw [step = 1cm]
      (1,1) grid (2,3); \foreach \x in {1,2} { \fill[white] (\x,1.09)
        circle (2.5pt); \fill[white] (\x,1.91) circle (2.5pt);
        \fill[white] (\x,2.09) circle (2.5pt);\fill[white] (\x,2.91)
        circle (2.5pt);} \foreach \y in {1,2,3} { \fill[white]
        (1.09,\y) circle (2.5pt); \fill[white] (1.91,\y) circle
        (2.5pt);} \foreach \x in {1,2} { \draw (\x,1.09) circle
        (2.5pt); \draw (\x,1.91) circle (2.5pt); \draw (\x,2.09)
        circle (2.5pt);\draw (\x,2.91) circle (2.5pt);} \foreach \y in
      {1,2,3} { \draw (1.09,\y) circle (2.5pt); \draw (1.91,\y) circle
        (2.5pt);} \fill (1.91,2) circle (2.5pt);
    \end{tikzpicture}
    \caption{}
    \label{fig:stencilD}
  \end{subfigure}
  \begin{subfigure}[b]{0.3\textwidth}
    \centering
    \begin{tikzpicture}
      \draw [step = 1cm,dotted] (0,0) grid (4,3); \draw [step = 1cm]
      (1,1) grid (3,2); \foreach \x in {1,2,3} { \fill[white]
        (\x,1.09) circle (2.5pt); \fill[white] (\x,1.91) circle
        (2.5pt);} \foreach \y in {1,2} {\fill[white] (1.09,\y) circle
        (2.5pt); \fill[white] (1.91,\y) circle (2.5pt); \fill[white]
        (2.09,\y) circle (2.5pt); \fill[white] (2.91,\y) circle
        (2.5pt);} \foreach \x in {1,2,3} { \draw (\x,1.09) circle
        (2.5pt); \draw (\x,1.91) circle (2.5pt);} \foreach \y in {1,2}
      {\draw (1.09,\y) circle (2.5pt); \draw (1.91,\y) circle (2.5pt);
        \draw (2.09,\y) circle (2.5pt); \draw (2.91,\y) circle
        (2.5pt);} \fill (2,1.09) circle (2.5pt);
    \end{tikzpicture}
    \caption{}
    \label{fig:stencilE}
  \end{subfigure}
  \begin{subfigure}[b]{0.3\textwidth}
    \centering
    \begin{tikzpicture}
      \draw [step = 1cm,dotted] (0,0) grid (4,3); \draw [step = 1cm]
      (1,1) grid (3,2); \foreach \x in {1,2,3} { \fill[white]
        (\x,1.09) circle (2.5pt); \fill[white] (\x,1.91) circle
        (2.5pt);} \foreach \y in {1,2} {\fill[white] (1.09,\y) circle
        (2.5pt); \fill[white] (1.91,\y) circle (2.5pt); \fill[white]
        (2.09,\y) circle (2.5pt); \fill[white] (2.91,\y) circle
        (2.5pt);} \foreach \x in {1,2,3} { \draw (\x,1.09) circle
        (2.5pt); \draw (\x,1.91) circle (2.5pt);} \foreach \y in {1,2}
      {\draw (1.09,\y) circle (2.5pt); \draw (1.91,\y) circle (2.5pt);
        \draw (2.09,\y) circle (2.5pt); \draw (2.91,\y) circle
        (2.5pt);} \fill (2,1.91) circle (2.5pt);
    \end{tikzpicture}
    \caption{}
    \label{fig:stencilF}
  \end{subfigure}
  \caption{Stencils corresponding to the shaded node
    types. (\textsc{A})--(\textsc{B}): Two node types of the lowest
    order ($p=0$) method; (\textsc{C})--(\textsc{F}): Four node types
    of the first order ($p=1$) method.}
  \label{fig:stencils}
\end{figure}
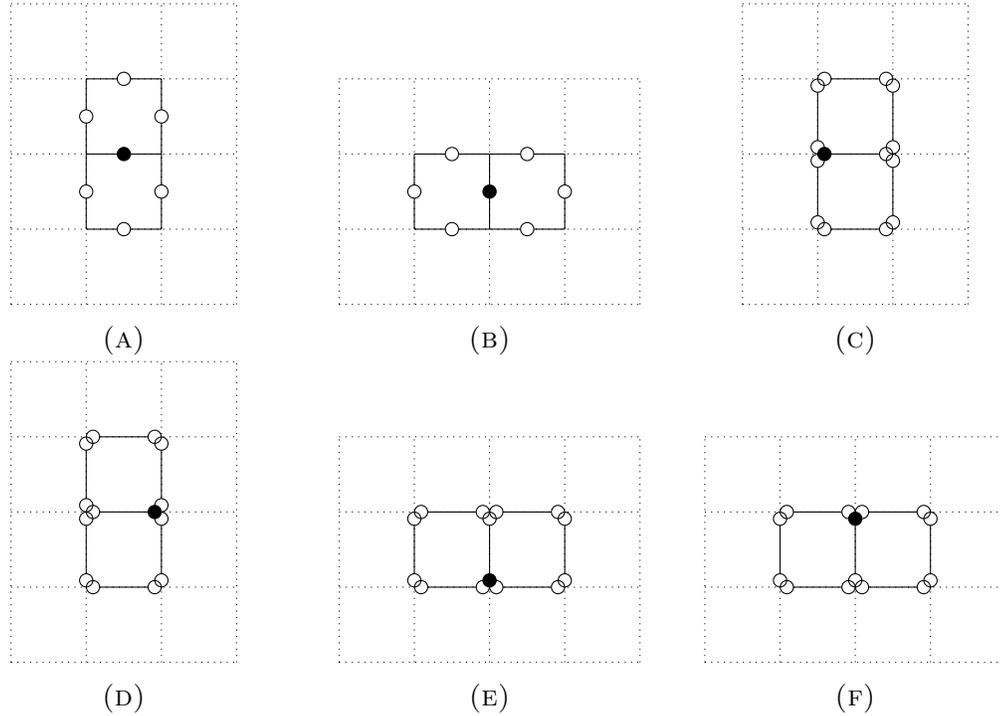

Again, we consider an infinite lattice of $h\times h$ square elements
with the ansatz that the HDG degrees of freedom interpolate a
plane wave traveling in the $\theta$ direction with wavenumber
$k^h$. 
The lowest order and next higher order HDG stencils are compared in
Figure~\ref{fig:stencils}. Note that the figure only 
shows the interactions of the degrees of
freedom corresponding to the $\hat \phi$ variable---the only degrees
of freedom involved after elimination of the $\vu$ and $\phi$ degrees
of freedom via static condensation. The lowest order method has two
node types (shown in Figures~\ref{fig:stencilA}--\ref{fig:stencilB}), 
while  the first order method has four node types (shown 
in Figures~\ref{fig:stencilC}--\ref{fig:stencilF}). For a
method with $S$ distinct node types, denote the solution value at a
node of the $s^{th}$ type, $1\leq s \leq S$, located at $\vl
h\in\mathbb{R}^2$, by $\psi_{s,\vl}$. With our ansatz that these
solution values interpolate a plane wave, we have
\[
\psi_{s,\vl}=a_se^{\ii \vec k^h \cdot\vl h},
\]
for some constants $a_s$.

Now, to develop notation to express each stencil's equation, we fix a
stencil within the lattice. Suppose that it corresponds to a node of
the $t^{th}$ type, $1\leq t\leq S$, that is located at $\vj h$. For
$1\leq s \leq S$, define $J_{t,s}=\{\vl \in \mathbb{R}^2 ~:~ \text{a
  node of type } s \text{ is located at } (\vj + \vl)h\}$ and, for
$\vl\in J_{t,s}$, denote the stencil coefficient of the node at
location $(\vj + \vl) h$ by $D_{t,s,\vl}$. The stencil coefficient is
the linear combination of the condensed local matrix entries that
would likewise appear in the global matrix of
equation~\eqref{eq:reducedHelmholtz}. Both it and the set $J_{t,s}$
are translation invariant, i.e., independent of $\vj$. Since
plane waves are exact solutions to the Helmholtz equation with zero
sources, the stencil's equation is
\[
\sum_{s=1}^S~\sum_{l\in J_{t,s}}D_{t,s,\vl}~\psi_{s,\vj+\vl}=0.
\]
Finally, we remove all dependence on $\vj$ in this equation by
dividing by $e^{\ii\vec k^h\cdot\vj h}$, so there are $S$ equations in
total, with the $t^{th}$ equation given by
\begin{equation}\label{eq:stencileq}
  \sum_{s=1}^S a_s\sum_{l\in J_{t,s}}D_{t,s,\vl}~e^{\ii \vec k^h
    \cdot\vl h}=0. 
\end{equation}
Defining the $S\times S$ matrix $F(k^h)$ by
\[ [F(k^h)]_{t,s}=\sum_{l\in J_{t,s}}D_{t,s,\vl}~e^{\ii
  k^h[\cos\theta, \sin\theta] \cdot\vl h},
\]
we observe that non-trivial coefficients $\{a_s\}$ exist if and only
if $k^h$ is such that
\begin{equation}\label{eq:det}
  \text{det}(F(k^h))=0.
\end{equation}
This is the equation that we solve to determine $k^h$ for a given
$\theta$ for any order (cf.~\eqref{eq:2DdispDetF}).

\begin{figure}
  \centering
  \begin{subfigure}[b]{0.45\textwidth}
    \includegraphics[width=\textwidth]{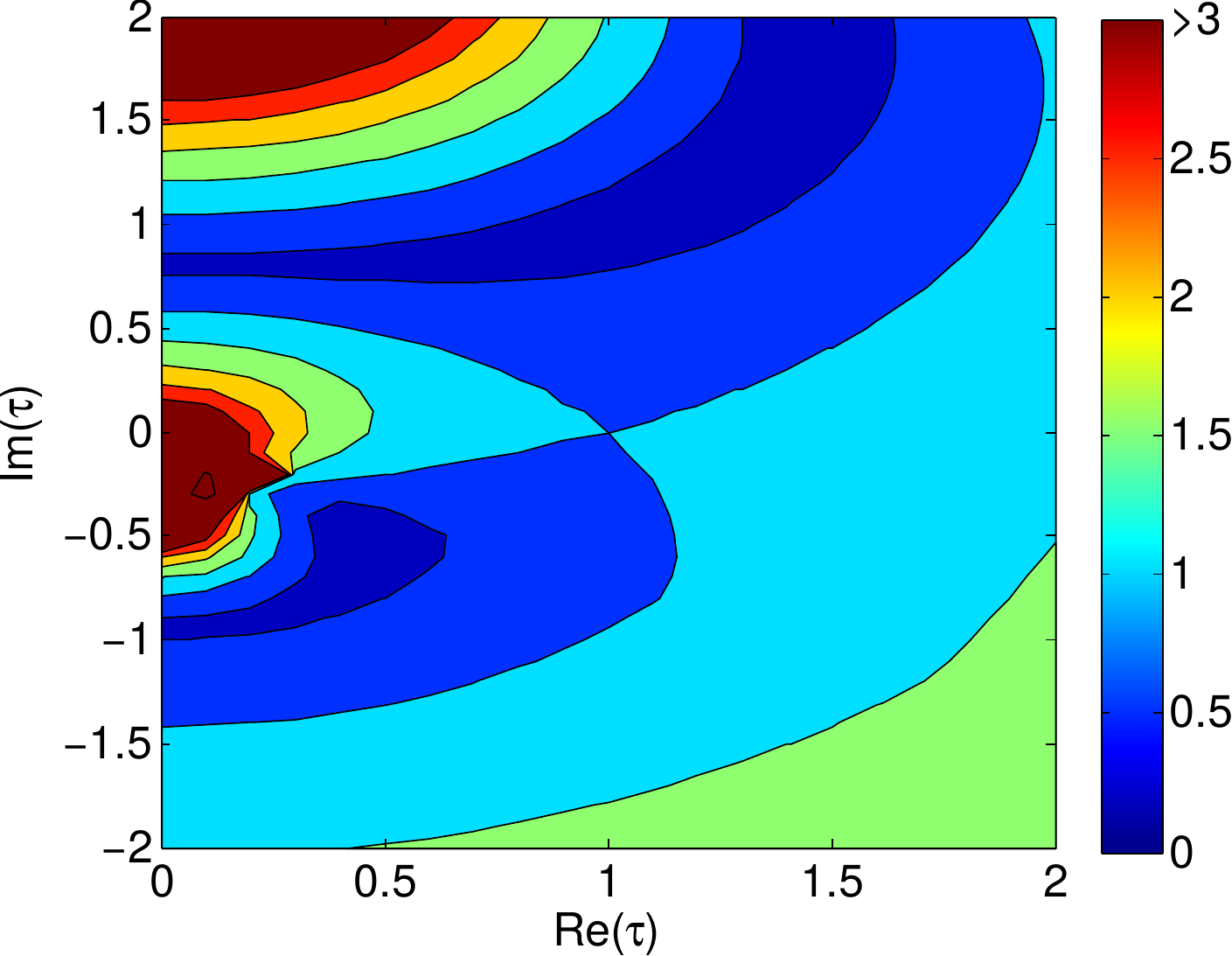}
    \caption{Dispersive error, $p=0$}
    \label{fig:disperr0}
  \end{subfigure}%
  \quad\quad
  \begin{subfigure}[b]{0.45\textwidth}
    \includegraphics[width=\textwidth]{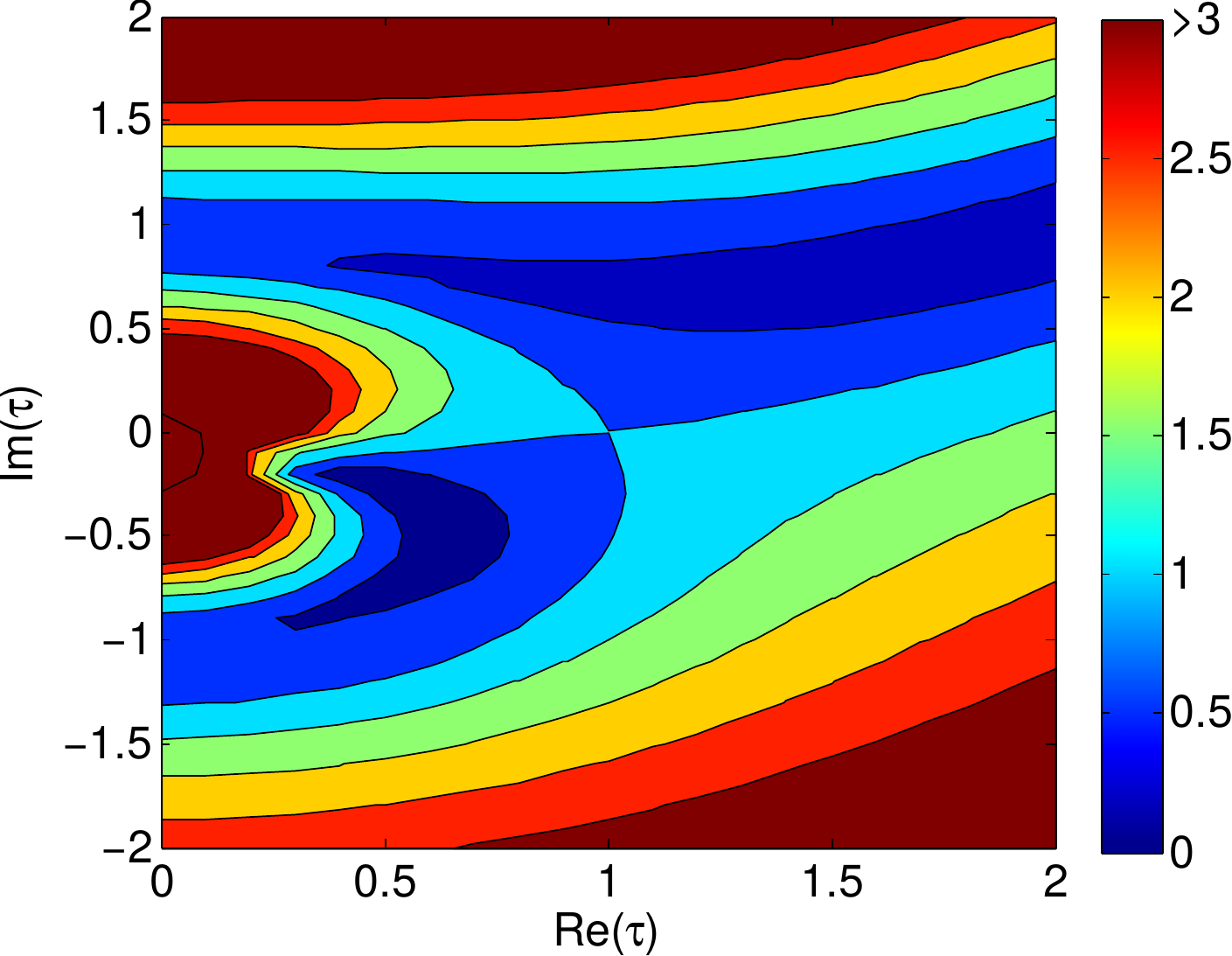}
    \caption{Dispersive error, $p=1$}
    \label{fig:disperr1}
  \end{subfigure}%

  \vspace{0.2cm}
  \begin{subfigure}[b]{0.45\textwidth}
    \includegraphics[width=\textwidth]{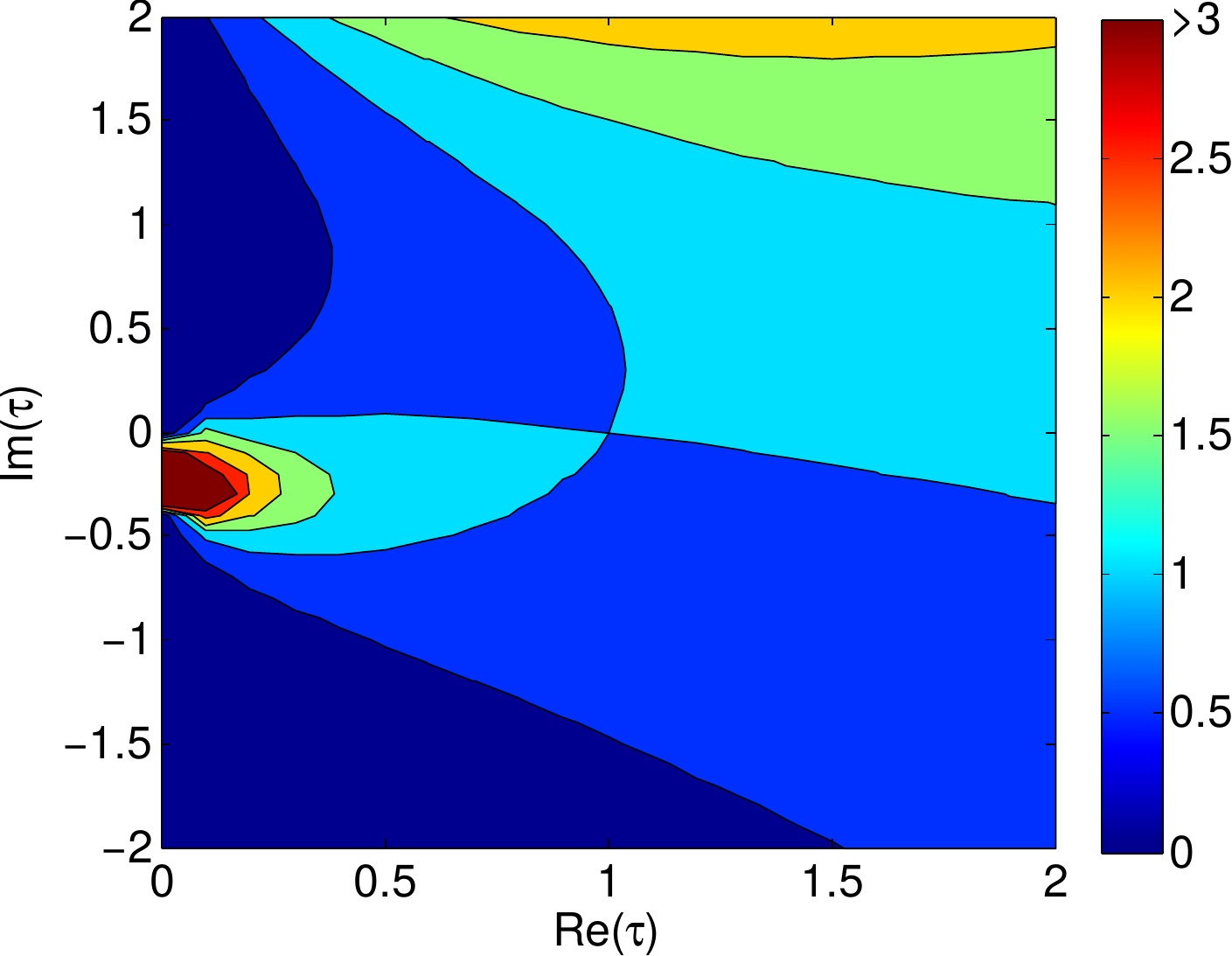}
    \caption{Dissipative error, $p=0$}
    \label{fig:dissiperr0}
  \end{subfigure}
  \quad\quad
  \begin{subfigure}[b]{0.45\textwidth}
    \includegraphics[width=\textwidth]{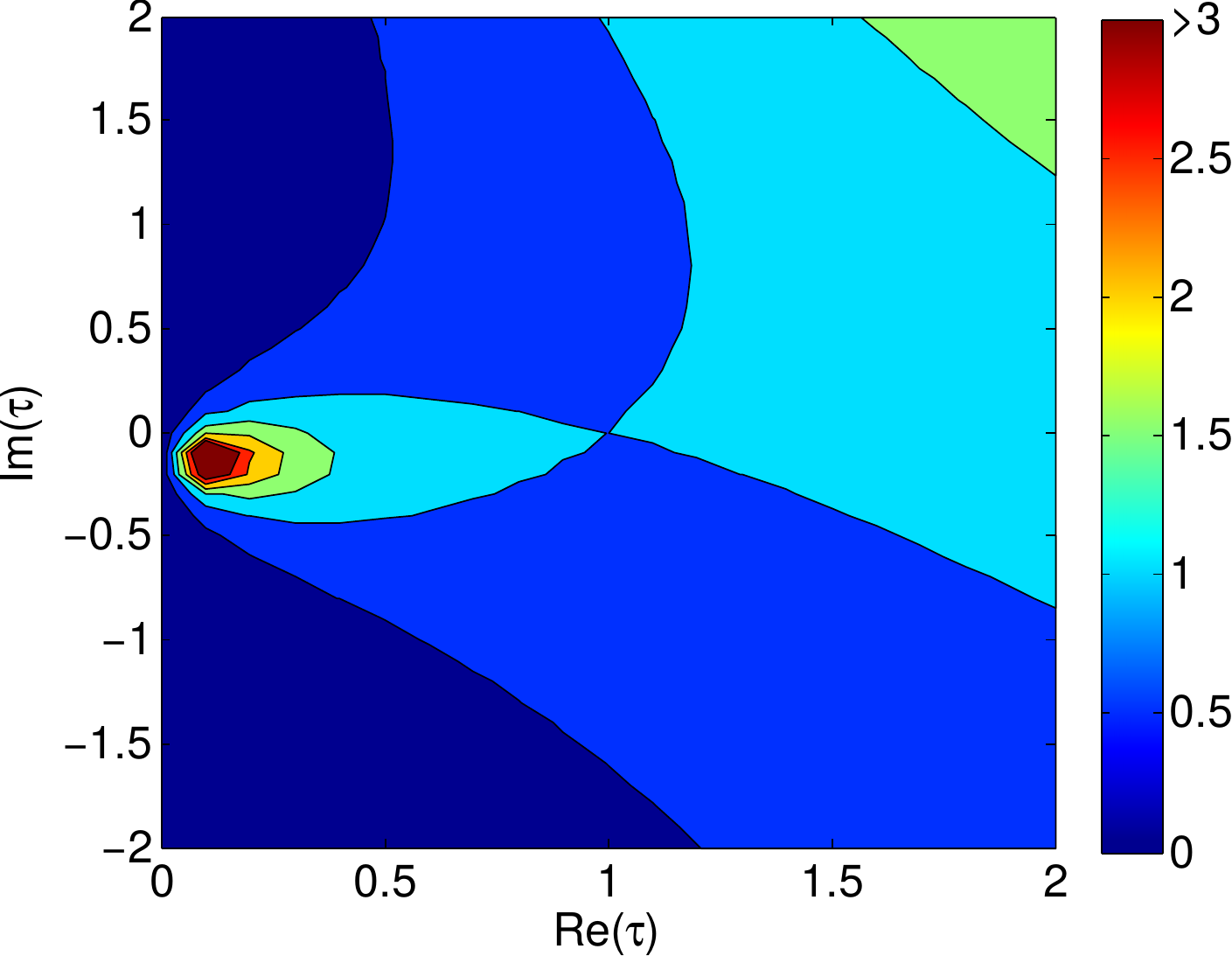}
    \caption{Dissipative error, $p=1$}
    \label{fig:dissiperr1}
  \end{subfigure}

  \vspace{0.2cm}
  \begin{subfigure}[b]{0.45\textwidth}
    \includegraphics[width=\textwidth]{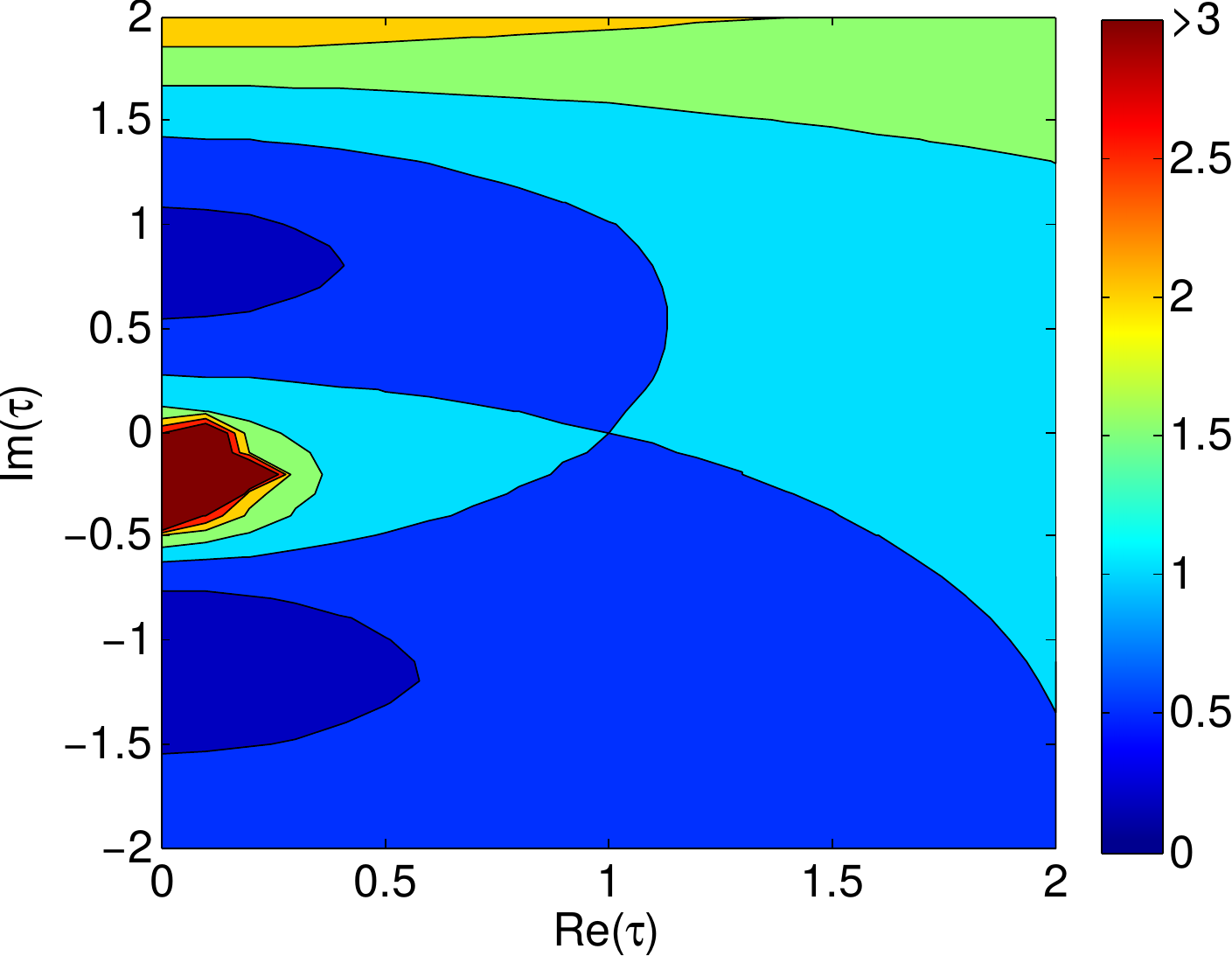}
    \caption{Total error, $p=0$}
    \label{fig:totalerr0}
  \end{subfigure}
  \quad\quad
  \begin{subfigure}[b]{0.45\textwidth}
    \includegraphics[width=\textwidth]{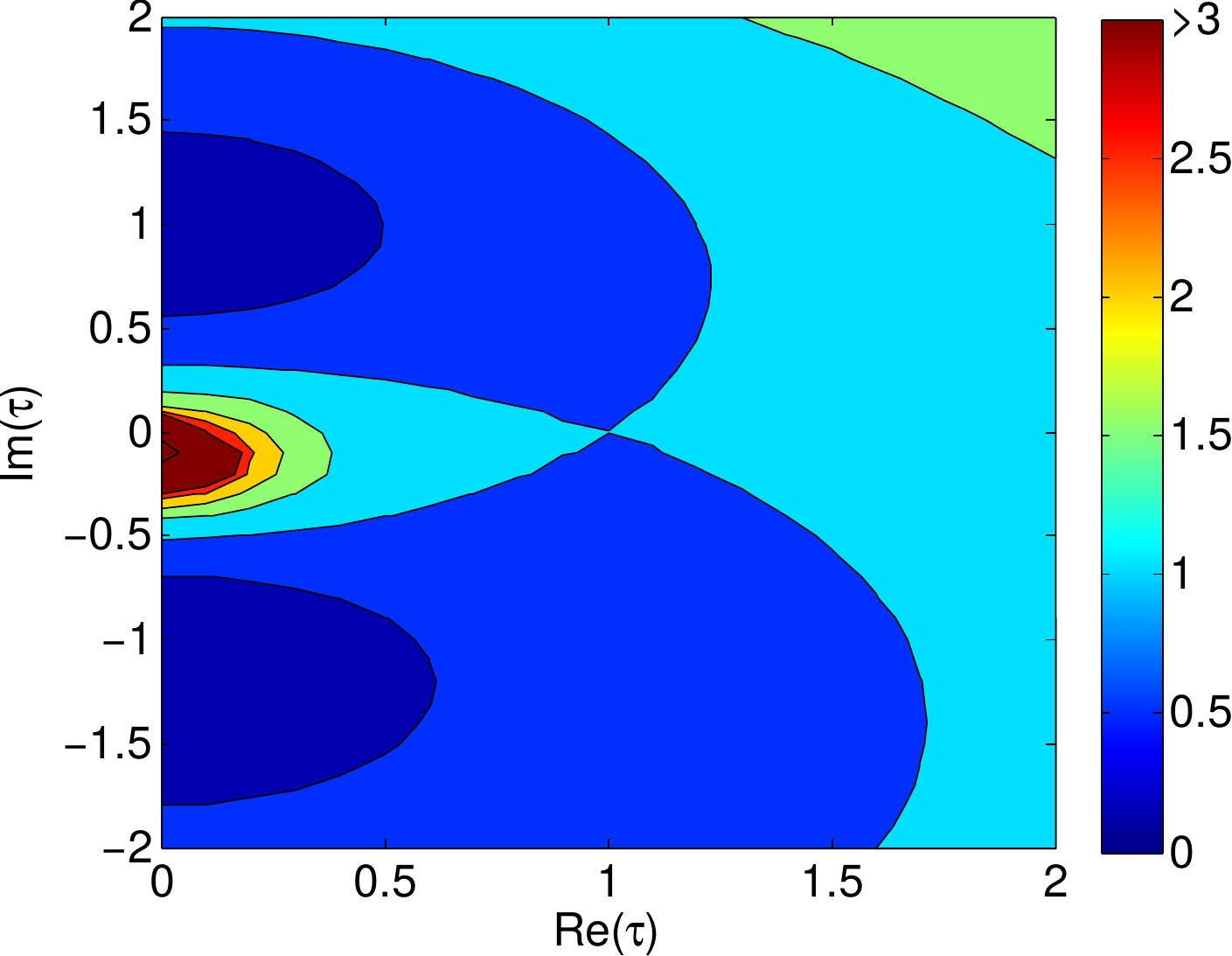}
    \caption{Total error, $p=1$}
    \label{fig:totalerr1}
  \end{subfigure}
  \caption{Normalized dispersive error $\epsilon_{\text{disp}}/\epsilon_{\text{disp}}^1$, 
    dissipative
    error $\epsilon_{\text{dissip}}/\epsilon_{\text{dissip}}^1$, and total error
    $\epsilon_{\text{total}}/\epsilon_{\text{total}}^1$
    for various $\tau\in\mathbb{C}$. Here,
    $k=1$, $h=\pi/4$, and $\epsilon_{\text{disp}}^1, 
\epsilon_{\text{dissip}}^1$ and $ \epsilon_{\text{total}}^1$
denote the errors when $\tau=1$, respectively.}
  \label{fig:errors}
\end{figure}

Results of the dispersion analysis are shown in
Figures~\ref{fig:circle} and~\ref{fig:errors}.  These figures combine
the results from previously discussed $p=0$ case and the $p=1$ cases
to facilitate comparison.  Here, we set $k=1$ and $h=\pi/4$, i.e., $8$
elements per wavelength. Figure~\ref{fig:errors} shows the dispersive,
dissipative, and total errors for various values of
$\tau\in\mathbb{C}$. For both the lowest order and first order cases,
although the dispersive error is minimized at a value of $\tau$ having
nonzero real part, the total error is minimized at a purely imaginary
value of $\tau$. This is attributed to the small dissipative errors
for such $\tau$. Specifically, the total error is minimized when
$\tau=0.87\ii$ in the $p=1$ case. This is close to the optimal value
of $\tau$ found (both analytically and numerically) for $p=0$.  This
value of $\tau$ reduces the total wavenumber error by $90\%$ in the
$p=1$ case, relative to the total error when using $\tau=1$.

\subsection*{Comparison with dispersion relation for the  Hybrid Raviart-Thomas method}

The HRT (Hybrid Raviart-Thomas) method is a classical mixed
method~\cite{ArnolBrezz85,CockbGopal04,RaviaThoma77} which has a
similar stencil pattern, but uses different spaces. Namely, the HRT
method for the Helmholtz equation is defined by exactly the same
equations as~\eqref{eq:globalHelmholtz} but with these choices of
spaces on square elements:
$V(K) = \mathcal{Q}_{p+1,p}(K) \times \mathcal{Q}_{p,p+1}(K)$,
$W(K) = \mathcal{Q}_p(K)$, and $M(F) = \mathcal{P}_p(F)$.  Here
$Q_{l,m}(K)$ denotes the space of polynomials which are of degree at
most $l$ in the first coordinate and of degree at most $m$ in the
second coordinate. The general method of dispersion analysis described
in the previous subsection can be applied for the HRT method. We
proceed to describe our new findings, which in the lowest order case
includes an exact dispersion relation for the HRT method.

\begin{figure}
  \centering
  \begin{subfigure}[b]{0.42\textwidth}
    \includegraphics[width=\textwidth]{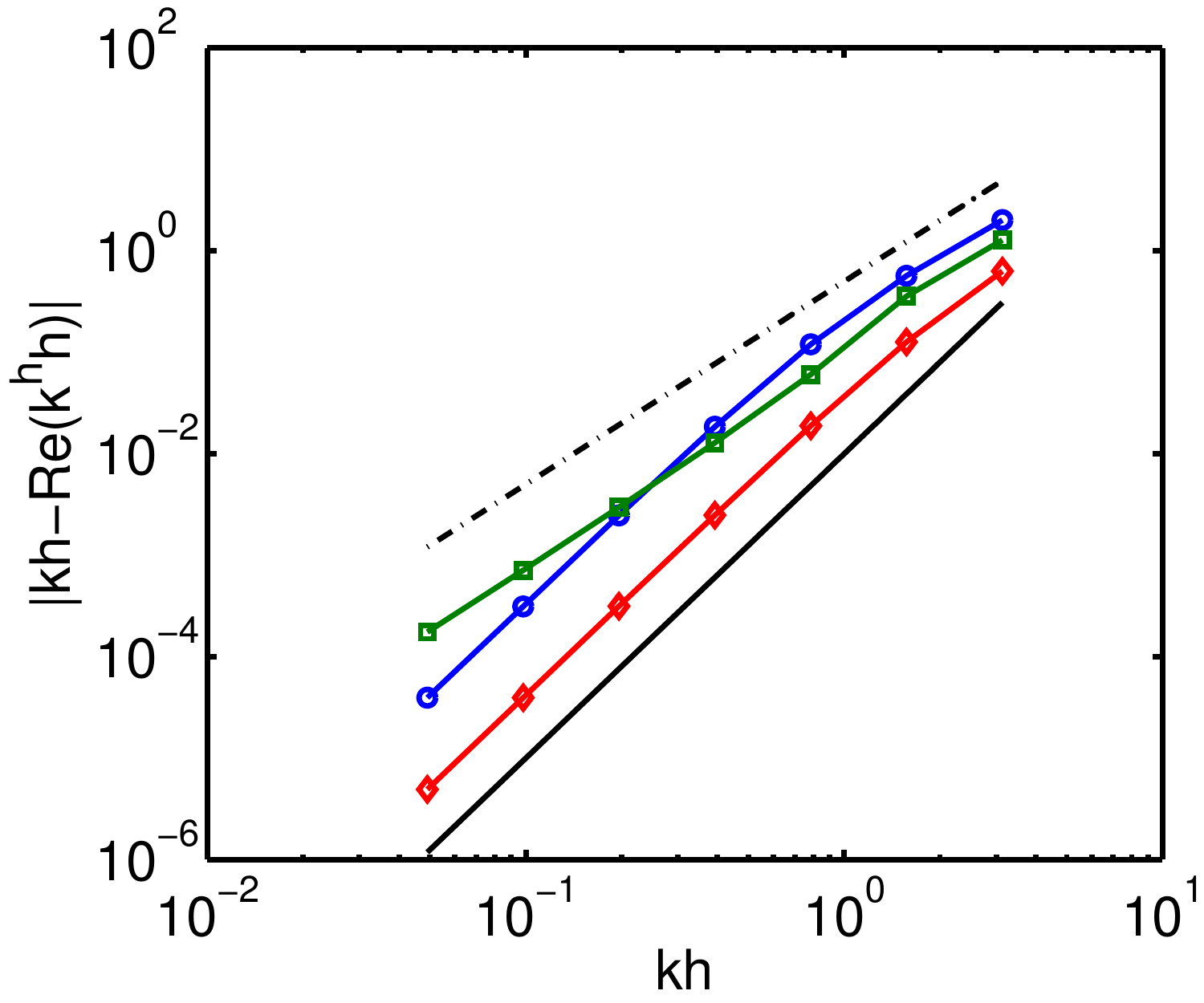}
    \caption{Dispersive error, $p=0$}
    \label{fig:disperrkh0}
  \end{subfigure}%
  \quad\quad
  \begin{subfigure}[b]{0.42\textwidth}
    \includegraphics[width=\textwidth]{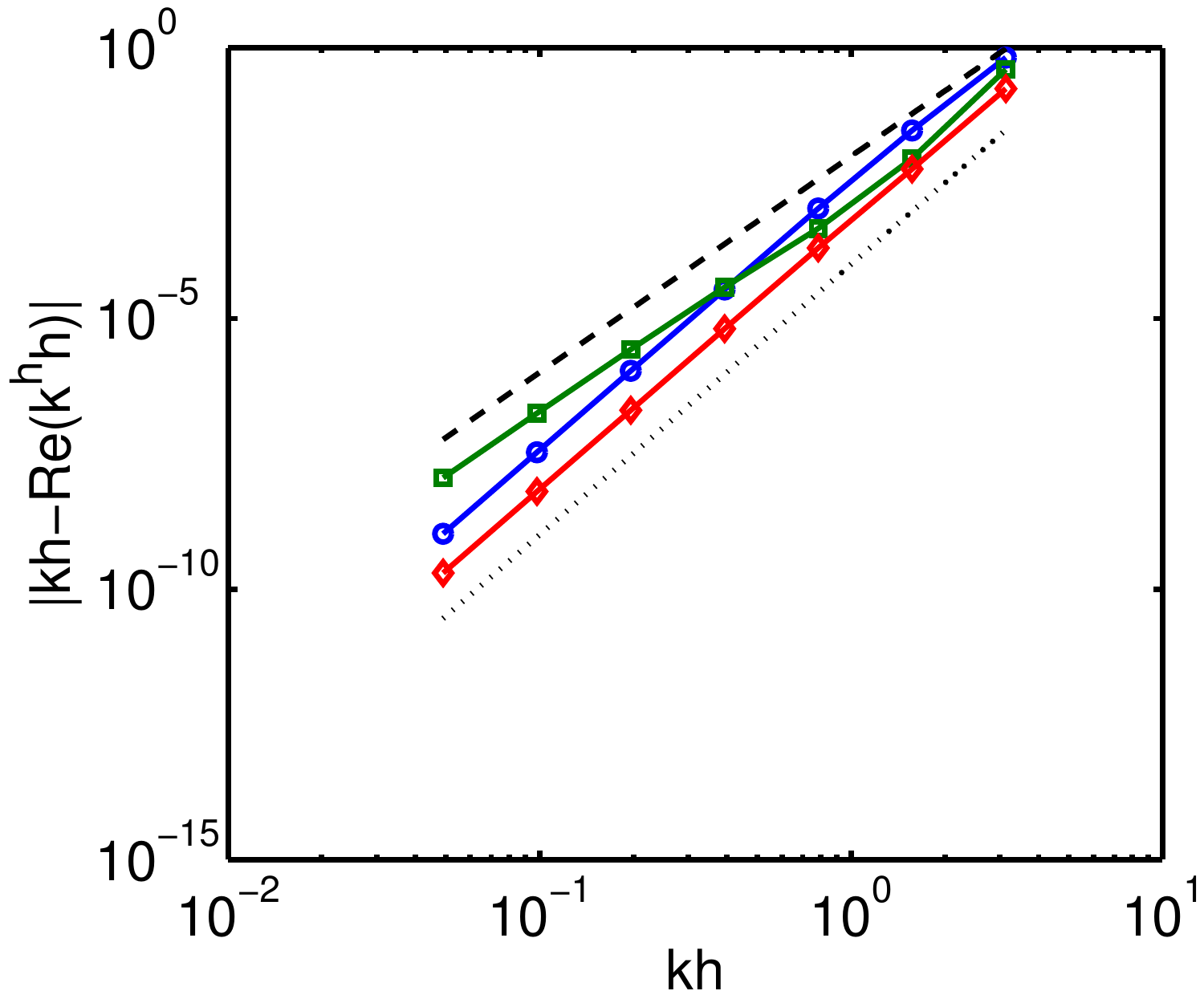}
    \caption{Dispersive error, $p=1$}
    \label{fig:disperrkh1}
  \end{subfigure}%

  \vspace{0.2cm}
  \begin{subfigure}[b]{0.42\textwidth}
    \includegraphics[width=\textwidth]{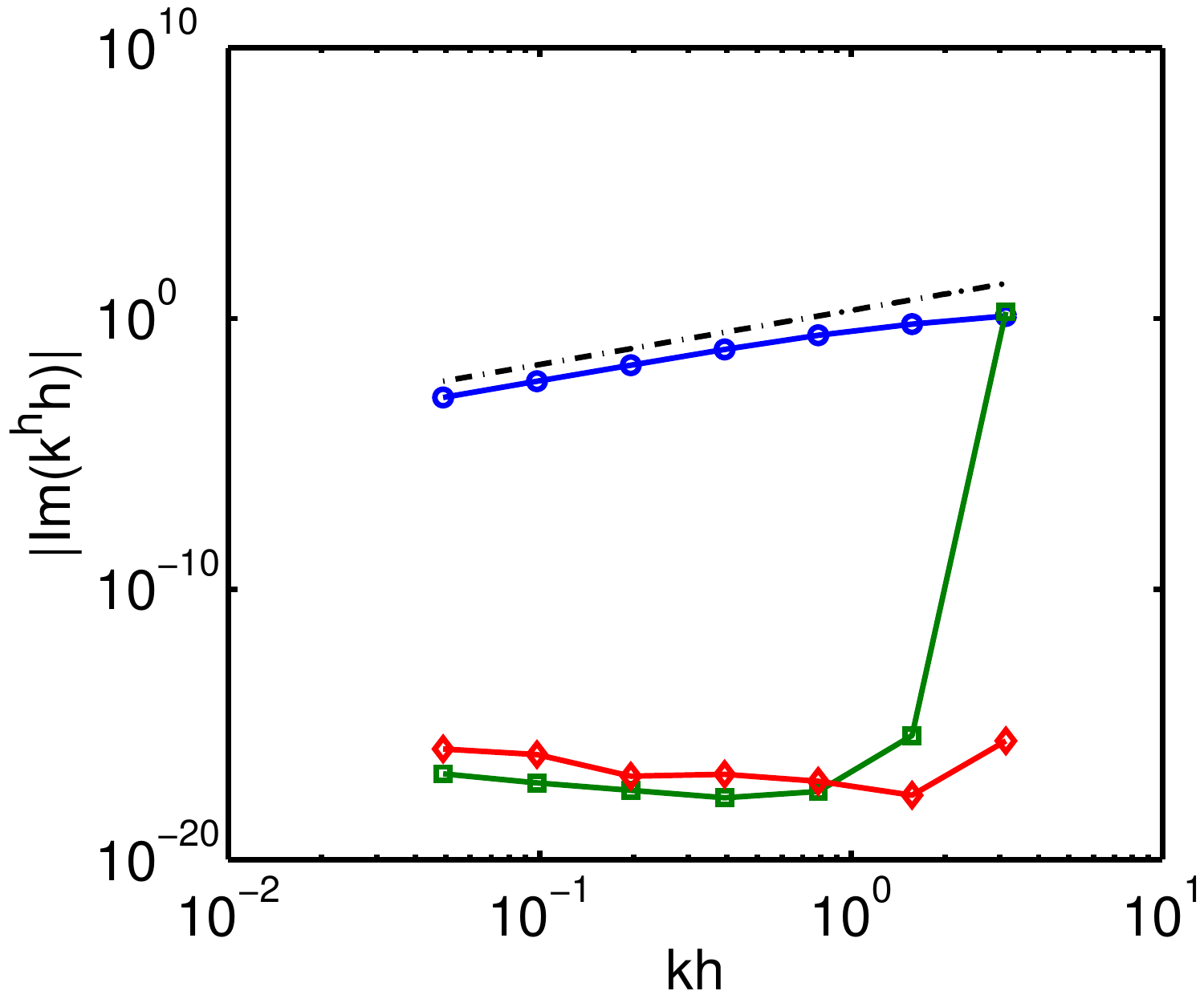}
    \caption{Dissipative error, $p=0$}
    \label{fig:dissiperrkh0}
  \end{subfigure}
  \quad\quad
  \begin{subfigure}[b]{0.42\textwidth}
    \includegraphics[width=\textwidth]{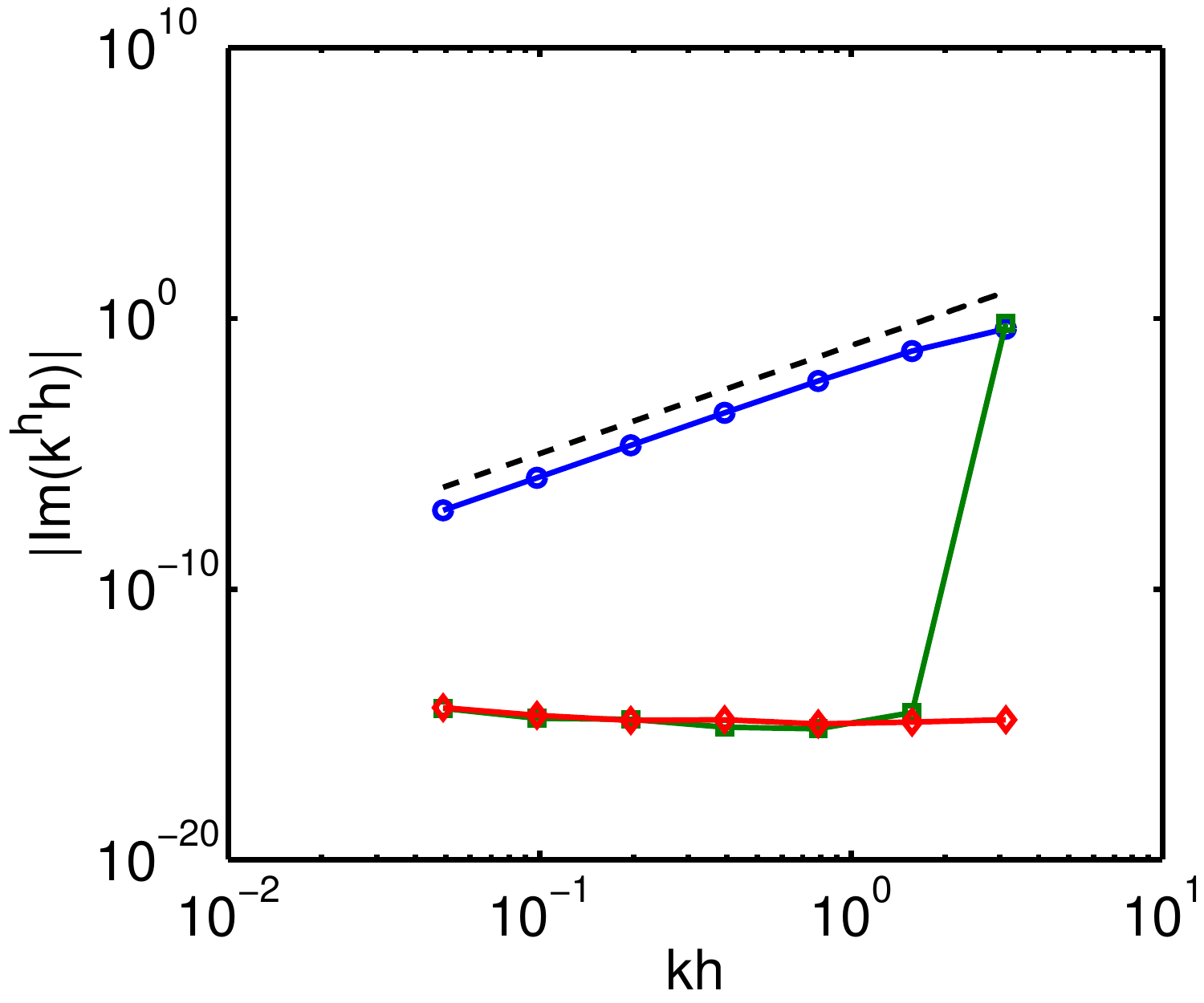}
    \caption{Dissipative error, $p=1$}
    \label{fig:dissiperrkh1}
  \end{subfigure}

  \vspace{0.2cm}
  \begin{subfigure}[b]{0.42\textwidth}
    \includegraphics[width=\textwidth]{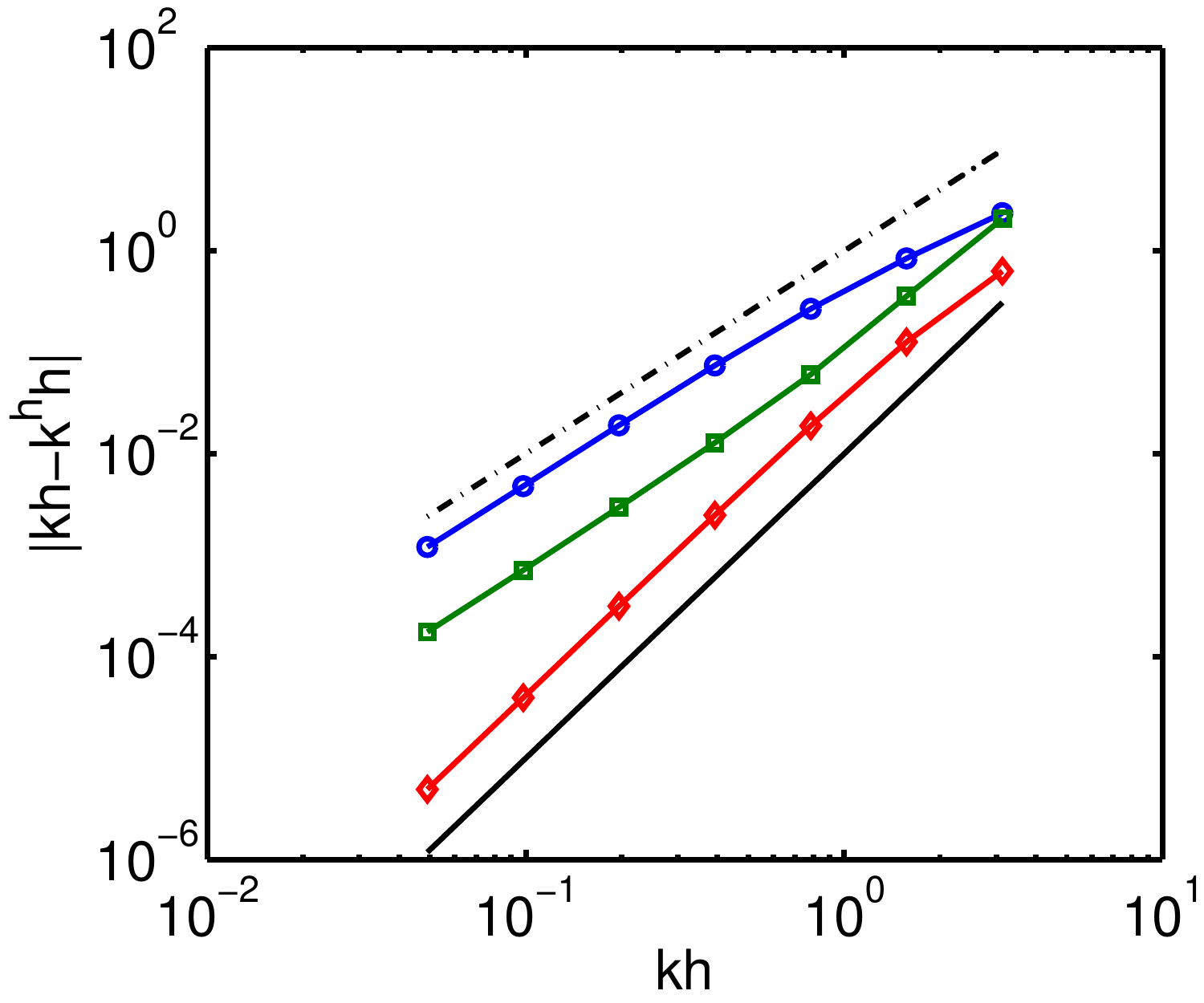}
    \caption{Total error, $p=0$}
    \label{fig:totalerrkh0}
  \end{subfigure}
  \quad\quad
  \begin{subfigure}[b]{0.42\textwidth}
    \includegraphics[width=\textwidth]{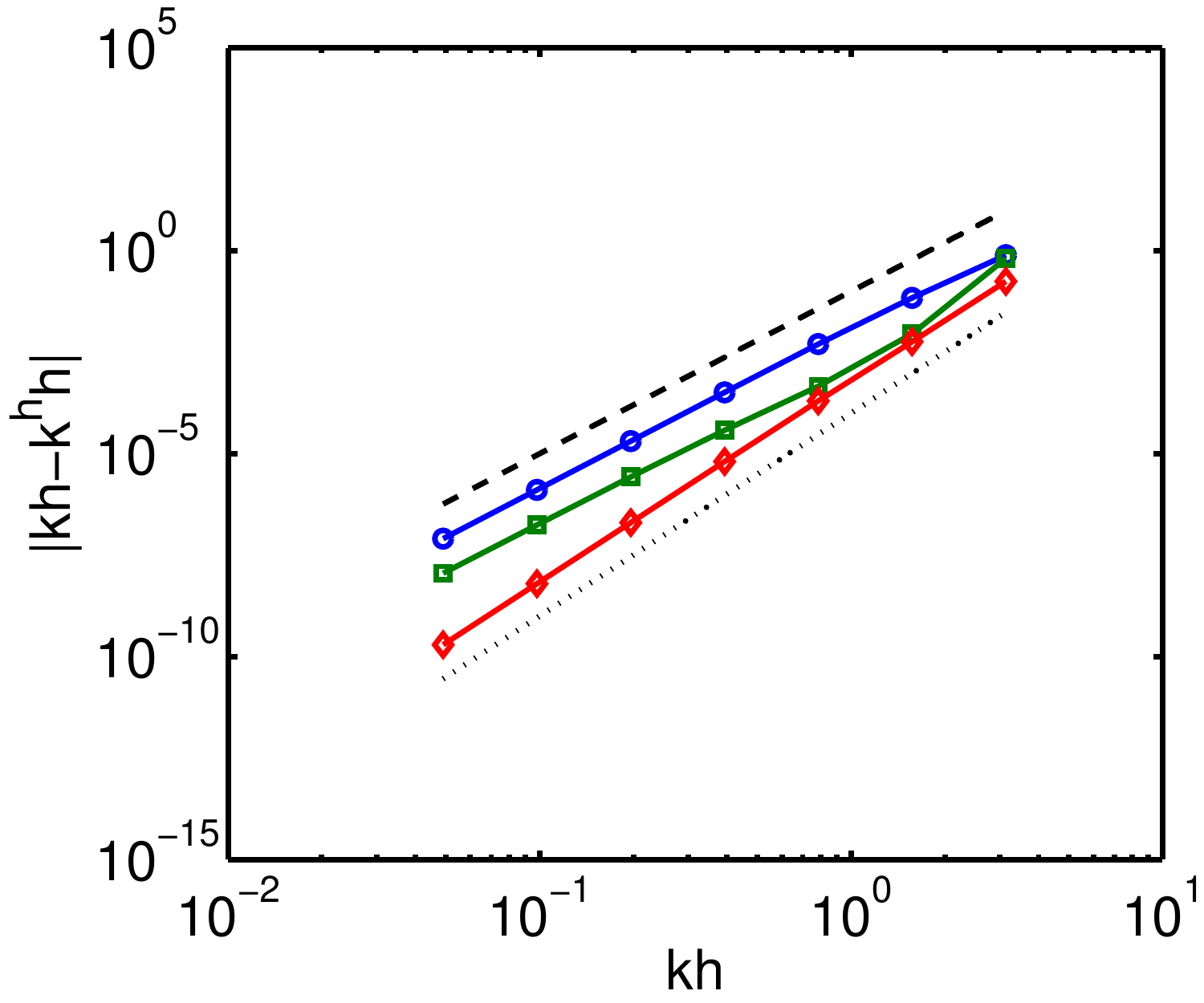}
    \caption{Total error, $p=1$}
    \label{fig:totalerrkh1}
  \end{subfigure}
  \begin{subfigure}[b]{0.7\textwidth}
    \includegraphics[width=\textwidth]{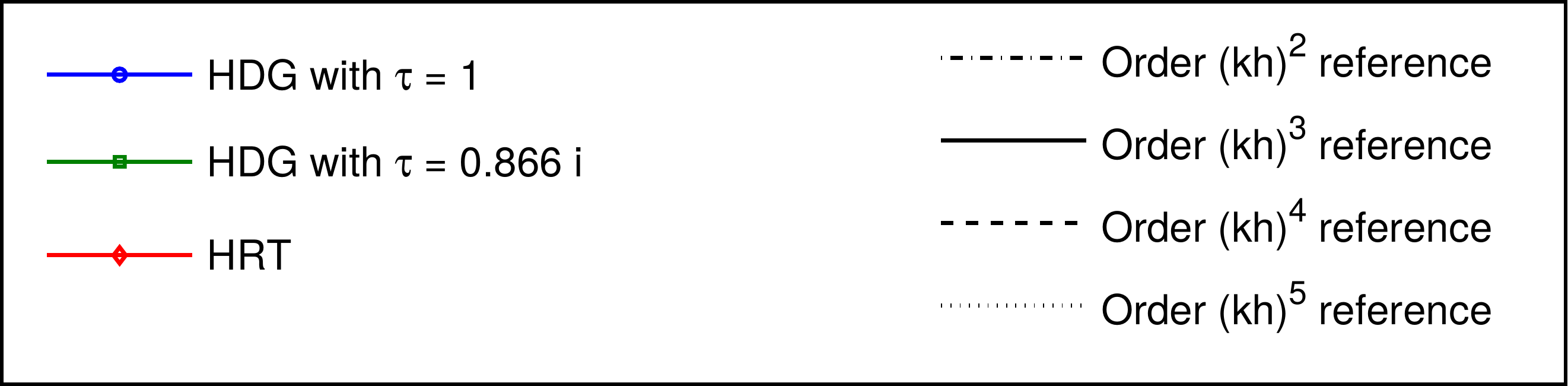}
  \end{subfigure}
  \caption{Convergence rates as $kh \to 0$}
  \label{fig:asymp_errors}
\end{figure}

In the $p=0$ case, after statically condensing the element matrices and
following the procedure leading to~\eqref{eq:2DdispDetF}, we find that
the discrete wavenumber $k^h$ for the HRT method satisfies the 2D
dispersion relation
\begin{equation}
  \label{eq:HRTdisp}
(c_1^2 + c_2^2)
\left(2 (hk)^2 -12\right) + 
c_1^2 c_2^2
\left(4 (hk)^2 + 48 \right)
+ (hk)^2 - 24 =0,
\end{equation}
where $c_j$, as defined in~\eqref{eq:8}, depends on $k^h_j$, which in
turn depends on $k^h$. Similar to the HDG case, we now observe that
the two equations
\begin{equation}
  \label{eq:HRTsuff}
  \left( 2 (hk_j)^2 + 12 \right) c_j^2 + (hk_j)^2 -12 =0,
  \qquad j=1,2,
\end{equation}
are sufficient conditions for~\eqref{eq:HRTdisp} to hold. Indeed, if
$l_j$ is the left hand side above, then
$l_1 (2c_2^2+1) + l_2 (2c_1^2 +1)$ equals the left hand side
of~\eqref{eq:HRTdisp}. The equations of~\eqref{eq:HRTsuff} can
immediately be solved:
\[
hk^h_j = 2\cos^{-1} \left( \frac{12 - (hk_j)^{2}}
{2\,(hk_j)^{2}+12}\right)^{1/2}
\]
Hence, using~\eqref{eq:7} and simplifying using the same type of
asymptotic expansions as the ones we previously used, we obtain 
\begin{equation}
  \label{eq:HRTasymp}
k^hh - kh = -\frac{  \cos( 4\,\theta) +3}{96 } \, (k h)^{3} + O( (kh)^5 )
\end{equation}
as $kh \to 0$. Comparing with~\eqref{eq:9}, we find that in the lowest
order case, {\em the HRT method has an error in wavenumber that is
  asymptotically one order smaller than the HDG method} for any
propagation angle, irrespective of the value of~$\tau$.

To conclude this discussion, we report the results from numerically
solving the nonlinear solution~\eqref{eq:HRTdisp} for $k^h(\theta)$
for an equidistributed set of propagation angles $\theta$. We have
also calculated the analogue of~\eqref{eq:HRTdisp} for the $p=1$ case
(following the procedure described in the previous subsection). Recall
the dispersive, dissipative, and total errors in the wavenumbers, as
defined in~\eqref{eq:10}. After scaling by the mesh size $h$, these
errors for both the HDG and the HRT methods are graphed in
Figure~\ref{fig:asymp_errors} for $p=0$ and $p=1$. We find that the
dispersive errors decrease at the same order for the HRT method and
the HDG method with $\tau=1$. While~\eqref{eq:HRTasymp} suggests that
the dissipative errors for the HRT method should be of higher order,
our numerical results found them to be zero (up to machine accuracy).
The dissipative errors also quickly fell to machine zero for the HDG
method with the previously discussed ``best'' value of
$\tau = \ii\sqrt{3}/2$, as seen from Figure~\ref{fig:asymp_errors}.

\section*{Conclusions}

These are the findings in this paper: 
\begin{enumerate}
\item There are values of stabilization parameters $\tau$ that will
  cause the HDG method to fail in time-harmonic electromagnetic and
  acoustic simulations using complex wavenumbers.
  (See equation~\eqref{eq:1} et~seq.)

\item If the wavenumber $k$ is complex, then choosing $\tau$ so that
  $\re(\tau) \im(k) \le 0$ guarantees that the HDG method is uniquely
  solvable. (See Theorem~\ref{thm:1}.)

\item If the wavenumber $k$ is real, then 
  even when the exact wave problem is not well-posed (such as at a 
  resonance), the HDG method remains uniquely solvable when
  $\re(\tau) \ne 0$. However, in such cases, we found
  the discrete stability to be tenuous. (See
  Figure~\ref{fig:conditioning} and accompanying discussion.)

\item For real wavenumbers $k$, we found that the HDG method
  introduces small amounts of artificial dissipation (see
  equation~\eqref{eq:imkh}) in general. However, when $\tau$ is purely
  imaginary and $kh$ is sufficiently small, artificial dissipation is
  eliminated (see equation~\eqref{eq:11}). 
  In 1D, the optimal values of $\tau$ that asymptotically
  minimize the total error in the wavenumber (that quantifies
  dissipative and dispersive errors together) are $\tau = \pm \ii$
  (see equation~\eqref{eq:kdiff}).

\item In 2D, for real wavenumbers $k$, the best values of $\tau$ are
  dependent on the propagation angle. Overall, values of $\tau$ that
  asymptotically minimize the error in the discrete wavenumber
  (considering all angles) is $\tau = \pm\ii \sqrt{3}/2$ (per
  equation~\eqref{eq:besttau}). While dispersive errors dominate the
  total error for $\tau= \ii\sqrt{3}/2$, dissipative errors dominate
  when $\tau=1$ (see Figure~\ref{fig:asymp_errors}).

\item The HRT method, in both the numerical results and the
  theoretical asymptotic expansions, gave a total error in the
  discrete wavenumber that is asymptotically one order smaller than
  the HDG method. (See~\eqref{eq:HRTasymp} and
  Figure~\ref{fig:asymp_errors}.)

\end{enumerate}




\begin{thebibliography}{10}

\bibitem{Ainsw04}
{\sc M.~Ainsworth}, {\em Discrete dispersion relation for {$hp$}-version finite
  element approximation at high wave number}, SIAM J. Numer. Anal., 42 (2004),
  pp.~553--575 (electronic).

\bibitem{ArnolBrezz85}
{\sc D.~N. Arnold and F.~Brezzi}, {\em Mixed and nonconforming finite element
  methods: implementation, postprocessing and error estimates}, RAIRO Mod\'el.
  Math. Anal. Num\'er., 19 (1985), pp.~7--32.

\bibitem{CockbGopal04}
{\sc B.~Cockburn and J.~Gopalakrishnan}, {\em A characterization of hybridized
  mixed methods for the {D}irichlet problem}, SIAM J. Numer. Anal., 42 (2004),
  pp.~283--301.

\bibitem{CockbGopalLazar09}
{\sc B.~Cockburn, J.~Gopalakrishnan, and R.~Lazarov}, {\em Unified
  hybridization of discontinuous {G}alerkin, mixed, and continuous {G}alerkin
  methods for second order elliptic problems}, SIAM Journal on Numerical
  Analysis, 47 (2009), pp.~1319--1365.

\bibitem{CockbGopalSayas10}
{\sc B.~Cockburn, J.~Gopalakrishnan, and F.-J. Sayas}, {\em A projection-based
  error analysis of {HDG} methods}, Math. Comp., 79 (2010), pp.~1351--1367.

\bibitem{CuiZhang13}
{\sc J.~Cui and W.~Zhang}, {\em An analysis of {HDG} methods for the
  {H}elmholtz equation}, IMA J. Numer. Anal., 34 (2014), pp.~279--295.

\bibitem{DeraeBabusBouil99}
{\sc A.~Deraemaeker, I.~Babu{\v s}ka, and P.~Bouillard}, {\em Dispersion and
  pollution of the {FEM} solution for the {Helmholtz} equation in one, two and
  three dimensions}, International Journal for Numerical Methods in
  Engineering, 46 (1999), pp.~471--499.

\bibitem{GiorgFernaHuert13}
{\sc G.~Giorgiani, S.~Fern\'andez-M\'endez, and A.~Huerta}, {\em Hybridizable
  discontinuous {Galerkin} p-adaptivity for wave propagation problems},
  International Journal for Numerical Methods in Fluids, 72 (2013),
  pp.~1244--1262.

\bibitem{GopalMugaOliva14}
{\sc J.~Gopalakrishnan, I.~Muga, and N.~Olivares}, {\em Dispersive and
  dissipative errors in the {DPG} method with scaled norms for the {Helmholtz}
  equation}, SIAM J. Sci. Comput., 36 (2014), pp.~A20--A39.

\bibitem{GriesMonk11}
{\sc R.~Griesmaier and P.~Monk}, {\em Error analysis for a hybridizable
  discontinuous {G}alerkin method for the {H}elmholtz equation}, J. Sci.
  Comput., 49 (2011), pp.~291--310.

\bibitem{HuertRocaAleks12}
{\sc A.~Huerta, X.~Roca, A.~Aleksandar, and J.~Peraire}, {\em Are high-order
  and hybridizable discontinuous {Galerkin} methods competitive?}, in
  Oberwolfach Reports, vol.~9 of Abstracts from the workshop held February
  12--18, 2012, organized by Olivier Allix, Carsten Carstensen, J\"org
  Schr\"oder and Peter Wriggers, Oberwolfach, Blackforest, Germany, 2012,
  pp.~485--487.

\bibitem{LiLantePerru13a}
{\sc L.~Li, S.~Lanteri, and R.~Perrussel}, {\em Numerical investigation of a
  high order hybridizable discontinuous {G}alerkin method for 2d time-harmonic
  {M}axwell's equations}, COMPEL, 32 (2013), pp.~1112--1138.

\bibitem{LiLantePerru14}
\leavevmode\vrule height 2pt depth -1.6pt width 23pt, {\em A hybridizable
  discontinuous {G}alerkin method combined to a {S}chwarz algorithm for the
  solution of 3d time-harmonic {M}axwell's equation}, J. Comput. Phys., 256
  (2014), pp.~563--581.

\bibitem{NguyePeraiCockb11a}
{\sc N.~Nguyen, J.~Peraire, and B.~Cockburn}, {\em Hybridizable discontinuous
  {Galerkin} methods for the time-harmonic {Maxwell's} equations}, Journal of
  Computational Physics, 230 (2011), pp.~7151--7175.

\bibitem{RaviaThoma77}
{\sc P.-A. Raviart and J.~M. Thomas}, {\em A mixed finite element method for
  2nd order elliptic problems}, in Mathematical aspects of finite element
  methods (Proc. Conf., Consiglio Naz. delle Ricerche (C.N.R.), Rome, 1975),
  Springer, Berlin, 1977, pp.~292--315. Lecture Notes in Math., Vol. 606.

\end{thebibliography}

\end{document}